\DeclareMathOperator{\Aut}{Aut}
\DeclareMathOperator{\Gr}{Gr}
\DeclareMathOperator{\QGr}{QGr}
\DeclareMathOperator{\gldim}{gldim}
\DeclareMathOperator{\id}{id}
\renewcommand{\k}{{\Bbbk}}
\renewcommand{\P}{\mathbb{P}}
\newcommand{\kk}{\Bbbk}
\newcommand{\V}{\mathcal{V}}
\newtheorem{thm}{Theorem}[section]
\newtheorem{lemma}[thm]{Lemma}
\newtheorem{cor}[thm]{Corollary}
\newtheorem{prop}[thm]{Proposition}
\theoremstyle{definition}
\newtheorem{notn}[thm]{Notation}
\newtheorem{ex}[thm]{Example}
\newtheorem{rmk}[thm]{Remark}
\newtheorem{example}[thm]{Example}
\newtheorem{Pf}{Proof$\!\!$}
\DeclareMathSymbol{\twoheadrightarrow}  {\mathrel}{AMSa}{"10}
\newcounter{letter}
\renewcommand{\theletter}{\rom{(}\alph{letter}\rom{)}}
\newcounter{rnum}
\renewcommand{\thernum}{\rom{(}\roman{rnum}\rom{)}}
\begin{document}


\title[Rank 2 Quadric]%
{Regular Algebras of Dimension Four Associated to Coordinate Rings of Rank-Two Quadrics}
\subjclass[2020]{14A22,16S37,16S38}

\keywords{Artin-Schelter regular algebra, point scheme, coordinate rings, rank-two quadric%
}

\maketitle

\vspace*{0.1in}

\renewcommand{\thefootnote}{\fnsymbol{footnote}}

\begin{center}
\textsc{Richard G.\ Chandler}\\
richard.chandler@untdallas.edu,\\
 
 \bigskip
 
\textsc{Hung Tran}\\
hvtran01@gmail.com,\\

\bigskip

\textsc{Padmini Veerapen}\\
pveerapen@tntech.edu,\\

\medskip
and
\medskip

\textsc{Xingting Wang}\\
xingtingwang@math.lsu.edu

\end{center}

\setcounter{page}{1}
\thispagestyle{empty}

\bigskip
\bigskip

\begin{abstract}

In this paper, we classify connected graded quadratic Artin-Schelter regular (AS-regular, henceforth) algebras of global dimension four that have a Hilbert series the same as that of the polynomial ring on four generators and that map onto a twisted homogeneous coordinate ring of a rank-two quadric. A twisted homogeneous coordinate ring is a construction that was defined by Artin, Tate, and Van den Bergh in \cite{ATV1, ATV2, AVdB1990} in the context of the classification of AS-regular algebras of global dimension three. In \cite{SV99,VVr}, the authors classified AS-regular algebras of global dimension four that map onto the twisted homogeneous coordinate ring of a rank-three and a rank-four quadric, respectively. We expand on their work to include the case of coordinate rings of a rank-two quadric.
\end{abstract}

\baselineskip18pt

\bigskip
\bigskip


\section*{Introduction}
It is widely accepted that Artin-Schelter regular (AS-regular, henceforth) algebras can be viewed as noncommutative analogues of polynomial rings.  These algebras were defined by  Artin and Schelter in 1987 \cite{AS}; Artin, Tate and Van den Bergh \cite{ATV1, ATV2} classified those of global dimension three using techniques that became known as noncommutative projective algebraic geometry. In this paper, our overarching goal is to provide examples that would contribute to the classification AS-regular algebras of dimension four by utilizing these geometric techniques.

In \cite{AVdB1990}, Artin and Van den Bergh elaborate on the construction of a ``noncommutative" version of the commutative homogeneous coordinate ring, known as a twisted homogeneous coordinate ring. As defined in \cite[Definition p. 249]{AVdB1990}, a twisted homogeneous coordinate ring, $S$, is given by three pieces of data, a projective scheme $Q$ defined over a field $\Bbbk$, an automorphism $\sigma$ of $Q$, and an invertible sheaf on $Q$, that is, we may write $S$ as $S(Q, \sigma, \mathcal{L})$. In \cite[Definition 4.3.1]{MSRIBook}, Rogalski highlights the fact that if the invertible sheaf $\mathcal{L}$ is $\sigma$-ample, then such rings are noetherian and finitely generated by \cite[Theorem 1.4]{AVdB1990}.

Shelton, Vancliff, and Van Rompay further examine the twisted homogeneous ring construction in \cite{SV99, VVr} by classifying the quadratic noetherian AS-regular algebras of dimension four that map onto the twisted homogeneous coordinate ring of a rank-three and a rank-four quadric in $\mathbb{P}^3$.  The algebras are classified by considering whether their point schemes $P$ either strictly contains a rank-two quadric, that is, $Q \subset P,$ or are exactly a rank-two quadric, that is, $P = Q$.  In the case of the quadric being strictly contained in the point scheme, it was shown that the algebra is completely determined by the geometric data and, thus, the defining relations can be directly recovered. In this paper, we extend their work to twisted homogeneous coordinate rings of a rank-2 quadric, $Q \subset \mathbb{P}^3$.

In Section \ref{sec1}, Theorem \ref{OmegaNormal}, we prove a result critical to the rest of the paper. Assuming that $R$ is a quadratic domain with $\Omega \in R_2$ and such that $A/{\langle \Omega \rangle} = S$, we show that $\Omega$ is normal. In Section \ref{sec2}, we identify regular algebras of global dimension four whose point scheme strictly contains a rank-two quadric.  We begin by finding such algebras whose automorphism on the quadric is the identity and then twist these algebras by a non-identity automorphism in $\Aut(\P^3)$; that is, we twist them via a Zhang twist. In Section \ref{sec3}, we show, in Proposition \ref{prop1}, that when the point scheme of $R$ is exactly a rank-two quadric, then the quadratic noetherian AS-regular algebras of dimension four are, in fact, central extensions of  AS-regular algebras of dimension three. 


\section{Preliminaries}\label{sec1}

\begin{notn}
Let $Q \in \P^3$ be a rank-two quadric and fix an automorphism $\tau$ of $Q$. By $S(Q)$, we mean the (commutative) homogeneous coordinate ring of $Q$. By $S$, we mean the twisted homogeneous coordinate ring corresponding to a rank-2 quadric $Q$, an automorphism $\tau$ of $Q$ and a $\tau$--ample invertible sheaf $\mathcal{L}$, that is, $S = S(Q, \tau, \mathcal{L})$. The set $\mathcal{R}(Q, \tau)$ consists of all graded algebras $R$ that are quadratic connected Artin-Schelter regular of global dimension four with Hilbert series $1/(1-t)^{-4}$ and such that there is a graded degree zero homomorphism mapping $R$ onto $S$. We denote $R \in \mathcal{R}(Q, \tau)$ by $T(V)/I$ where $V$ is a four-dimensional vector space, $T(V)$ is the tensor algebra on $V$ and $I$ is a finitely generated graded ideal. The graph of $\tau$ on $Q$ will be denoted by $\Gamma_{\tau}(Q)$.
\end{notn}

In \cite[Lemma 1.3]{SV99}, if $A$ is Artin-Schelter regular of global dimension four with four linearly independent generators, then $A$ is Koszul and has Hilbert series $H_A(t) = (1-t)^{-4}$. In particular, $A$ is quadratic with six defining relations and has Gelfand-Kirillov dimension four. Henceforth, we write $R$ as $R = T(V)/W$ where $W$ is a six-dimensional vector space spanned by the defining relations of $R$. 

\begin{rmk}\label{omega}
\begin{enumerate}
\item Using an argument similar to \cite[Lemma 1.7]{SV99}, we note that the Hilbert series of a twisted homogeneous coordinate ring $S$ of any quadric $Q' \in \P^3$ is $(1+t)(1-t)^{-3}$. This follows from the fact that we may view $S$ as a quotient of $S(\P^3)$ by the ideal generated by $Q'$. As a result, the Hilbert series of $S$ is that of $Q'$ subtracted from that of $S(\P^3)$ which is given by $(1-t)^{-4} - t^2(1-t)^{-4}$. In general, we will use $S($--$)$ to mean the (commutative) homogeneous coordinate ring corresponding to that scheme.
\item By hypothesis, $R$ has Hilbert series $(1+t)^{-4}$ and maps onto $S$ with Hilbert series $(1+t)(1-t)^{-3}$. Similar to \cite[before Definition 1.3]{VVr}, this implies that there exists $\Omega \in R_2$ such that $R/{\langle \Omega \rangle} \cong S$. Now, we know that there are no nonzero elements of $S_2$ that vanish on the graph of $\tau$, as $S$ is a domain, so any nonzero element of $R_2$ that vanishes on $\tau$ must be a scalar multiple of $\Omega$. Thus, for any $u, v \in R_1$, $u^{\tau}v - v^{\tau}u \in R_2$ and is a scalar multiple of $\Omega$.    
\end{enumerate}
\end{rmk}

\begin{example}\label{omegaNotNormal}
Let $A$ be the $\k$-algebra given by the polynomial ring $\k[x_1,x_2,x_3,x_4]$ subject to the following defining relations $$\begin{matrix} x_1x_2-x_2x_1, & x_1x_3-x_3x_1, & x_1x_4-x_4x_1, & x_2x_4-x_4x_2, & x_3x_4-x_4x_3, & x_1x_2. \end{matrix}$$
This algebra is not noetherian since the following chain of right ideals does not stablize: $$x_2A \subset x_2A + x_3x_2A \subset x_2A+x_3x_2A+x_3^2x_2A\subset \cdots \subset \sum_{i=0}^n x_3^i x_2 A \subset \cdots$$
However, $A/\langle \Omega \rangle$, where $\Omega=x_2x_3-x_3x_2 \in A$, is the coordinate ring of the rank-two quadric $\V(x_1x_2)$, which is noetherian.  Therefore, \cite[Lemma 8.2]{ATV1} implies that $\Omega$ is not normal in $A$.
\end{example}

\begin{rmk}\label{zhangtwist}
\begin{enumerate}
\item By Example \ref{omegaNotNormal}, we see that, in general, for a quadratic element $\Omega \in R_2$, $R/{\langle \Omega \rangle} \cong S$ does not ensure that $\Omega$ is normal. This is in agreement with \cite[Example 1.6]{SV99} when the quadric is rank-three but differs from \cite[Lemma 1.5]{VVr} when the quadric is rank-four.
\item 
By \cite[Theorems 3.1 \& 3.5]{Zhang96}, when a connected graded algebra $A$ is twisted by an automorphism or by a twisting system of $A$ denoted by $\phi$, then the category of graded modules of $A$, $\Gr A$, and that of graded modules over $A^{\phi}$, $\Gr A^{\phi}$, are equivalent; that is, the category of graded modules of $A$ are invariant under twists by automorphisms and twisting systems. Additionally, by \cite[Theorem 1.4]{Zhang96}, since $\QGr A$ is equivalent to $\QGr A^{\phi}$, it follows that the point schemes of $A$ and $A^{\phi}$ are equivalent, where $\QGr A$, is the quotient category $\Gr A/\mbox{tors } A$ and tors $A$ is the full subcategory of $\Gr A$ consisting of modules which are finite dimensional. The global dimension and Hilbert series of $A$ and $A^{\phi}$ also remain invariant. We refer to the algebras $A$ and $A^{\phi}$ as \textit{Zhang twists} of each other as suggested by other authors (see e.g., \cite{Si}).
\end{enumerate}
\end{rmk}

\begin{prop} \label{OmegaNormal} If $A$ is a quadratic domain  with $\Omega \in A_2$ such that $A/\langle \Omega \rangle = S$, then $\Omega$ is normal in $A$. 
\begin{proof}
In this proof, we use techniques similar to \cite[Lemma 1.7]{SV99} with significant modifications. By Remark \ref{omega}, we may assume that $\tau = \id$, that is, $S=S(Q)$,  and $uv-vu\in \k \Omega$ for all $u,v \in A_1$.
Let $Q=\V(X_1X_2)$, where $X_1, X_2 \in A_1$ are linearly independent, and $X_3 \in A_1$ such that $X_3 \notin \Bbbk^{\times} X_1 \oplus \Bbbk^{\times} X_2$. It follows that $X_iX_j-X_jX_i=\alpha_{ij}\Omega$ for $\alpha_{ij} \in \k$, $i,j \in \{1,2,3\}$.  Define $y=\alpha_{23}X_1 - \alpha_{13}X_2+\alpha_{12}X_3$.  It can be verified that $y$ commutes in $A$ with all elements of the form $\k X_1+\k X_2+\k X_3$. Note also that $X_1X_2\in \k^\times \Omega$, as it is an element of $A_2$ that vanishes on the graph of $Q$ and is nonzero since $A$ is a domain; thus, $\Omega\in\k^\times X_1X_2$. Observe that the plane $\V(y)\subset \P^3$ meets $Q$ either in two lines, counted with multiplicity, or in a plane.

If $\V(y)$ meets $Q$ in two lines, counted with multiplicity, then we may present $A$ using generators $x_1=X_1$, $x_2=X_2$, and $x_3=y$ and so, $x_1x_3=x_3x_1$ and $x_2x_3=x_3x_2$. Moreover, if $\alpha_{12} = 0$, that is, if $x_1x_2=x_2x_1$, then $\Omega$ commutes with $x_1$, $x_2$ and $x_3$.  Additionally, if $x_4 \in A_1 \setminus (\k x_1 + \k x_2 + \k x_3)$, then
\begin{align*} x_4 \Omega
&= x_4(\beta x_1 x_2), \qquad \mbox{ where } \Omega = \beta x_1x_2 \\
&= \beta(x_1x_4 + \gamma \Omega) x_2 \\
&= \beta (x_1(x_2 x_4 + \delta \Omega)+\gamma \Omega x_2) \\
&= \Omega(x_4+\beta \delta x_1 + \beta \gamma x_2),  
\end{align*}
where $\gamma,\delta \in \k$, that it, $\Omega$ is normal in $A$. Otherwise, $\alpha_{12} \ne 0$, that is, $x_1x_2\neq x_2x_1$, and we show there is $x_4 \in A_1 \setminus (\k x_1 + \k x_2 + \k x_3$) such that $x_1x_4 = x_4 x_1$ and $x_2 x_4 = x_4 x_2$.  Choose $w \in A_1 \setminus (\k x_1 + \k x_2 + \k x_3$).  If $w$ commutes with $x_1$, define $z=w$; otherwise $x_1w-wx_1=\beta \Omega$, for $\beta \in \k^\times$.  In this case, define $z=\alpha_{12}w-\beta x_2$; in either case, $z$ commutes with $x_1$.  If, in addition, $z$ commutes with $x_2$, define $x_4=z$.  Otherwise, $x_2z-zx_2=\gamma \Omega$, for some $\gamma \in \k^\times$; define $x_4=\alpha_{12}z + \gamma x_1$. This $x_4$ commutes with $x_2$ (and with $x_1)$, so we may always find an $x_4$ as described above.  It follows that $\Omega$ is normal in $A$.

If $\V(y)$ meets $Q$ in a plane, then we may assume without loss of generality that $y = \alpha_{23} X_1$. That is, $\alpha_{13} = 0 = \alpha_{12}$ and the defining relations of $A$ include $X_1X_2=X_2X_1$, $X_1X_3=X_3X_1$, and $X_2X_3-X_3X_2=\alpha_{23}\Omega$.  Choose $w \in A_1 \setminus (\k X_1 + \k X_2 + \k X_3$).  If $w$ commutes with $X_3$, define $z=w$; otherwise $X_3w-wX_3=\beta \Omega$, for $\beta \in \k^\times$.  In this case, define $z=\alpha_{23}w+\beta X_2$; in either case, $z$ commutes with $X_3$. If, in addition, $z$ commutes with $X_2$, define $X_4=z$. Otherwise, $X_2z-zX_2=\gamma \Omega$, for some $\gamma \in \k^\times$; define $X_4=\alpha_{23}z - \gamma X_3$. As chosen, $X_4$ commutes with $X_2$ (and with $X_3)$, so we may always find an $X_4$ as described above.  It follows that $\Omega$ is normal in $A$. 
\end{proof}
\end{prop}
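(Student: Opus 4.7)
The plan begins with a reduction via Remark \ref{zhangtwist}: Zhang twists preserve being a domain and the Hilbert series, and they transport normality of a degree-two element through the twist, so I may assume the automorphism $\tau$ is the identity. In this case $S=S(Q)$ is the ordinary commutative coordinate ring, and the general identity $u^\tau v - v^\tau u \in \k\Omega$ from Remark \ref{omega}(2) specializes to $[u,v]:=uv-vu \in \k\Omega$ for every $u,v \in A_1$. This makes $A_1$ ``quasi-commutative modulo $\Omega$'', and the rest of the argument is a careful linear-algebraic exploitation of this structure.

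Next I would fix linearly independent $X_1,X_2 \in A_1$ with $Q=\V(X_1X_2)$ and pick $X_3 \in A_1$ outside $\k X_1+\k X_2$, setting $[X_i,X_j]=\alpha_{ij}\Omega$. Because $X_1X_2$ is a degree-two element mapping to zero in $S$ and $A$ is a domain, I get $\Omega \in \k^\times X_1X_2$, which is the geometric hook that realizes $\Omega$ as a product of two generators. A short direct check shows that the linear element $y:=\alpha_{23}X_1-\alpha_{13}X_2+\alpha_{12}X_3$ commutes in $A$ with each of $X_1,X_2,X_3$. The argument then splits on the geometry of the plane $\V(y)\subset \P^3$: either $\V(y)$ meets $Q$ in at most two lines, or it contains a whole component of $Q$. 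In the first case I set $x_1=X_1, x_2=X_2, x_3=y$; in the second case the relations collapse so that one generator is already central against the other two.

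In each case the remaining task is to produce a fourth vector $x_4 \in A_1$, linearly independent from the first three, that commutes with two prescribed generators (namely $x_1,x_2$ in the first case, or $X_2,X_3$ in the second). I would do this by a two-stage correction procedure: start with an arbitrary $w \in A_1$ outside the given span; if $w$ fails to commute with the first prescribed generator, the commutator is a nonzero scalar multiple of $\Omega$, which I cancel by subtracting a suitable scalar multiple of another generator (this works precisely because $\Omega \in \k^\times X_1X_2$). A second identical correction forces commutation with the second prescribed generator, and the coefficients can be arranged so that linear independence is preserved. Once such a basis $x_1,x_2,x_3,x_4$ is in hand with $\Omega=\beta x_1 x_2$ and $x_3,x_4$ commuting with both $x_1,x_2$, normality is immediate: for any generator $u$, sliding $u$ past $x_1$ and then past $x_2$ produces only scalar multiples of $\Omega$, yielding $u\Omega \in \Omega A_1$ and symmetrically $\Omega u \in A_1 \Omega$. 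I expect the main obstacle to be the bookkeeping in the sub-case where $\alpha_{12}=0$ (so $x_1,x_2$ already commute and $\Omega$ arises from a different commutator) versus $\alpha_{12} \ne 0$, and in particular verifying that the successive correction preserves both linear independence and the required non-vanishing of the scalars controlling each correction step.
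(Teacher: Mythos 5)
Your proposal follows essentially the same route as the paper's proof: reduce to $\tau=\id$ so that all commutators of degree-one elements lie in $\k\Omega$, identify $\Omega\in\k^\times X_1X_2$, introduce the auxiliary central-against-$X_1,X_2,X_3$ element $y=\alpha_{23}X_1-\alpha_{13}X_2+\alpha_{12}X_3$, split on how $\V(y)$ meets $Q$, and in each case produce a fourth generator commuting with the two prescribed ones by the same two-stage correction, after which normality of $\Omega=\beta x_1x_2$ follows by sliding generators across. The only cosmetic difference is that you source the reduction to $\tau=\id$ from the Zhang-twist remark rather than Remark \ref{omega}, which does not change the argument.
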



\bigskip

\section{Regular Algebras whose Point Scheme is not the Union of Two Planes}\label{sec2}

\begin{rmk}\label{HypsOnR}
Since $R$ is AS-regular with Hilbert series $H_R(t) = (1-t)^{-4}$, by \cite[Definition in \S 5.8, Corollary in \S 6.2]{Le}, we may assume $A$ is a quadratic noetherian algebra satisfying
\end{rmk}
\begin{enumerate}
\item $H_A(t) = (1-t)^{-4}$;
\item $A$ is Auslander-regular of global dimension four;
\item $A$ satisfies the Cohen-Macaulay property.
\end{enumerate}

\begin{lemma}
Let $x_i$ and $x_j$ be two degree-one elements in $R$. If $x_i$ and $x_j$ are both normal in $R$, then $x_ix_j = \alpha_{ij} x_jx_i$, for some $\alpha_{ij} \in \k^\times$.
\end{lemma}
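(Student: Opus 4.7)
The plan is: first, if $x_i$ and $x_j$ are linearly dependent, say $x_j=\lambda x_i$, then $x_ix_j=\lambda x_i^2=x_jx_i$ and $\alpha_{ij}=1$ works, so assume from now on that $x_i,x_j$ are linearly independent. Recall also that $R$ is a Noetherian domain as a consequence of the Auslander-regular and Cohen-Macaulay conditions recorded in Remark~\ref{HypsOnR}; in particular, $x_i$ and $x_j$ are non-zero-divisors.

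The crux of the argument is to establish that the quotient $R/\langle x_j\rangle$ is itself a domain. Since $x_j$ is a degree-one normal non-zero-divisor in the quadratic Koszul AS-regular algebra $R$, standard results on dropping a normal regular element (in the spirit of \cite{Le}) give that $R/\langle x_j\rangle$ is AS-regular of global dimension three with Hilbert series $(1-t)^{-3}$. By the Artin-Schelter and Artin-Tate-Van den Bergh classification of regular algebras of global dimension three, every such algebra is a Noetherian domain. Packaging quotient regularity with this dimension-three classification is where the real work sits, and it is the main obstacle of the proof.

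Once the quotient is known to be a domain, the remainder is a short calculation. Normality of $x_i$ in $R$ yields $a\in R_1$ with $x_ix_j=ax_i$. Reducing modulo $\langle x_j\rangle$ gives $\bar{a}\,\bar{x}_i=0$ in $R/\langle x_j\rangle$; but $\bar{x}_i\neq 0$ since $\langle x_j\rangle\cap R_1=\k x_j$ and $x_i,x_j$ are linearly independent, so we conclude $\bar{a}=0$. Hence $a\in \langle x_j\rangle\cap R_1=\k x_j$, and writing $a=\alpha_{ij}x_j$ delivers the claimed identity $x_ix_j=\alpha_{ij}x_jx_i$. Finally, $\alpha_{ij}\in\k^\times$ because $R$ is a domain and $x_ix_j\neq 0$.
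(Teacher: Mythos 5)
Your argument is correct, but it takes a genuinely different and considerably heavier route than the paper's. The paper's proof is a two-line computation inside $R_2$: normality of $x_i$ and of $x_j$ places $x_ix_j$ in $R_1x_i\cap x_jR_1$, and since $x_jx_i$ lies in the same space, the domain property of $R$ identifies $x_ix_j$ with a nonzero scalar multiple of $x_jx_i$; no quotient of $R$ is ever formed. You instead use normality of $x_i$ only to write $x_ix_j=ax_i$ with $a\in R_1$, and then throw all the weight onto showing that $R/\langle x_j\rangle$ is a domain, via the ``dropping a degree-one regular normal element'' machinery (Rees-type lemma for the Gorenstein condition, Koszulity of the quotient for finiteness of global dimension) followed by the Artin--Tate--Van den Bergh theorem that three-dimensional regular algebras are Noetherian domains. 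That chain of results is indeed available in the literature (it is the same toolkit the paper leans on elsewhere, e.g.\ in the spirit of \cite{Le} and \cite{ATV2}), so your proof stands, and it has the virtue of making completely transparent why the coefficient $a$ must lie in $\k x_j$: it dies in a domain quotient. What it costs is proportion: you invoke the full dimension-three classification and a nontrivial quotient-regularity theorem to prove a statement that follows from normality and the domain property of $R$ alone. If you want to keep your structure but lighten it, note that you do not need $R/\langle x_j\rangle$ to be a domain in all degrees --- you only need that $\bar{a}\bar{x}_i\neq 0$ for $\bar{a},\bar{x}_i$ nonzero of degree one, and that already follows from the paper's direct normality argument without any classification input.
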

\begin{proof}
Since $x_1$ and $x_2$ are two normal elements in the quadratic algebra $R$, it follows that $x_1x_2 \in R_1 x_1 \cap x_2 R_1$ and $x_2x_1 \in R_1 x_2 \cap x_1 R_1$; hence, $x_1x_2 \in \k^{\times} x_2 x_1$ and $x_2x_1 \in \k^{\times} x_1 x_2$ as $R$ is a domain.    
\end{proof}

\begin{lemma}\cite[Lemma 2.2 \& Proposition 2.4]{VVr}\label{normalElt}
If the point scheme $P$ is not the quadric $Q$, then $W'$ is a five-dimensional subspace of $W$. Moreover, there exists a normal element in $R_1$.
\end{lemma}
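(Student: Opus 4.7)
The plan is to combine Proposition \ref{OmegaNormal} with an analysis of the multilinearized relations of $R$ at points of $P \setminus Q$. Because $R$ is a quadratic AS-regular algebra of global dimension four with Hilbert series $(1-t)^{-4}$, it is a domain (cf.\ Remark \ref{HypsOnR}); together with $R/\langle\Omega\rangle = S$, Proposition \ref{OmegaNormal} then yields that $\Omega$ is normal, so $\Omega v = \sigma(v)\Omega$ for a graded automorphism $\sigma$ of $R$ and every $v \in V$. This is the algebraic backbone: the seven-dimensional relation space $W_S = W \oplus \k\Omega \subseteq V\otimes V$ of $S$ multilinearizes to cut out $\Gamma_\tau(Q) \subseteq \P^3 \times \P^3$, whereas $W$ cuts out a larger bihomogeneous subscheme $\Gamma$ whose first projection is $P$.

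For the dimension claim, I would recover $W'$ (in the sense of \cite{VVr}) as the kernel of the evaluation map $W \to \k$ at a chosen bihomogeneous point $(p,p') \in \Gamma \setminus \Gamma_\tau(Q)$; such a point exists precisely because $P \neq Q$. Since $\Omega(p,p') \neq 0$ (as $p \notin Q = \V(\Omega)$), the analogous evaluation of $W_S = W \oplus \k\Omega$ at $(p,p')$ is nonzero and lands in the $\Omega$-direction, forcing the evaluation on $W$ alone to be a nonzero linear functional, whence $\dim W' = 5$.

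For the normal element in $R_1$, the codimension-one subspace $W' \subseteq W$ is Koszul-dual to a distinguished line in $V^*\otimes V^*$; decomposing this tensor via the extra component of the point scheme isolates a linear form whose dual $x \in V$ satisfies $xv - \mu(v)x \in \k\Omega$ for every $v \in V$ and some linear endomorphism $\mu$ of $V$, and the normality of $\Omega$ then promotes this congruence to the genuine inclusion $xR_1 \subseteq R_1 x$, making $x$ normal. The principal obstacle I anticipate is the reducibility of $Q = \V(X_1X_2)$: the extra component of $P$ may lie entirely inside one of the planes $\V(X_1)$ or $\V(X_2)$, requiring a case analysis reminiscent of the two-lines-versus-plane dichotomy in the proof of Proposition \ref{OmegaNormal}, and one must verify that the resulting $x$ is a genuinely new normal generator rather than merely a scalar multiple of $X_1$ or $X_2$.
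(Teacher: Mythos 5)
First, a point of comparison: the paper does not prove this lemma at all --- it is quoted from \cite{VVr} (Lemma 2.2 and Proposition 2.4), and $W'$ is never even defined in the present paper --- so there is no in-paper argument to match your attempt against. Judged on its own terms, your attempt has a concrete error in the dimension count. You define $W'$ as the kernel of the evaluation functional on $W$ at a chosen point $(p,p')\in\Gamma\setminus\Gamma_\tau(Q)$. But $\Gamma$ is by construction the common zero locus of the multilinearizations of $W$: every element of $W$ vanishes at every point of $\Gamma$, so this functional is identically zero and its kernel is all six dimensions of $W$, not a hyperplane. The inference ``the evaluation of $W_S=W\oplus\k\Omega$ at $(p,p')$ is nonzero and lands in the $\Omega$-direction, forcing the evaluation on $W$ alone to be a nonzero functional'' is a non sequitur --- nonvanishing of the $\Omega$-component says nothing about the $W$-component, and here the $W$-component is forced to vanish. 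The intended object (consistent with Remark \ref{omega}(2) and with how $W'$ is used in the proof of Proposition \ref{centralElts}) is $W'=W\cap A$, where $A$ is the six-dimensional span of the tensors $u^\tau\otimes v-v^\tau\otimes u$; since $A$ maps into $\k\Omega\subset R_2$ and $W=\ker(V\otimes V\to R_2)$, one gets $\dim W'\geq 5$ for free, and the content of the lemma is pinning this down and exploiting it.

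The second assertion --- existence of a normal element in $R_1$ --- is where the real work lies, and your sketch does not supply it. A five-dimensional subspace of the antisymmetric tensors on a four-dimensional $V$ need \emph{not} contain a full subspace of the form $\{x^\tau\otimes v-v^\tau\otimes x: v\in V\}$ (a generic hyperplane section of the Klein quadric in $\P^5$ contains no planes), so ``decomposing the Koszul-dual line'' cannot by itself isolate the normal element; one must use the geometry of the extra component of $P$, as in Proposition \ref{quadline}, where the line $L$ yields the normalizing pair $\{v_1,v_2\}$ with $L=\V(v_1,v_2)$. Your final promotion step is also gapped: from $xv-\mu(v)x\in\k\Omega$ for all $v\in V$ one cannot conclude $xR_1\subseteq R_1x$ from normality of $\Omega$ alone --- one needs $\Omega\in R_1x$ (or $\Omega\in xR_1$), which holds only for special $x$ (for instance $x\in\{X_1,X_2\}$ when $\Omega\in\k^\times X_1X_2$), and that is exactly the case analysis you defer. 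As written, the proposal establishes neither assertion of the lemma.
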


\begin{prop}\cite[Lemma 2.5]{VVr}
\label{normalautom}
If $\omega \in R_1$ is normal and $\phi \in \Aut(R)$ is defined by $\omega x = x^{\phi}\omega$ for all $x \in R$, then $\phi \in \kk^{\times} \tau$.     
\end{prop}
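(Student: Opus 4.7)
The plan is to pass to the quotient $S = R/\langle \Omega\rangle$ and argue geometrically using the twisted homogeneous coordinate ring description. Because $\Omega$ lies in degree two, the ideal $\langle \Omega\rangle$ has trivial degree-one component, so the projection $R_{1}\to S_{1}$ is an isomorphism of four-dimensional vector spaces. Under this isomorphism $\omega$ maps to a nonzero normal element $\bar\omega\in S_{1}$ whose normalizing automorphism on $S$ is the reduction $\bar\phi$ of $\phi$. Since $R$ is generated in degree one, $\phi$ is determined by its action on $R_{1}$, so it suffices to show that $\bar\phi$ acts on $S_{1}$ as a scalar multiple of the pullback action $\tau^{*}$.

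Next I would unfold the definition of $S = S(Q,\tau,\mathcal{L})$: for $f,g\in S_{1} = H^{0}(Q,\mathcal{L})$, the product is $f\cdot g = f\otimes\tau^{*}(g)\in H^{0}(Q,\mathcal{L}\otimes\tau^{*}\mathcal{L})$. The normality identity $\bar\omega\,\bar x=\bar x^{\bar\phi}\,\bar\omega$ in $S$ then translates into
\[
\bar\omega\otimes\tau^{*}(\bar x)=\bar x^{\bar\phi}\otimes\tau^{*}(\bar\omega)
\]
as sections of $\mathcal{L}\otimes\tau^{*}\mathcal{L}$. Since $S$ is a domain, $\bar\omega$ is not a zero divisor, so dividing (in the function field of $Q$) yields the pointwise formula $\bar x^{\bar\phi}(p)=\eta(p)\,\bar x(\tau(p))$ on the open locus where $\tau^{*}(\bar\omega)$ is nonvanishing, with $\eta = \bar\omega/\tau^{*}(\bar\omega)$ a rational function on $Q$ independent of the choice of $\bar x$.

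To finish I would show that $\eta$ is a nonzero scalar. Regularity follows because $\bar x^{\bar\phi}$ is a global section of $\mathcal{L}$ for every $\bar x\in S_{1}$: a pole of $\eta$ at $p\in Q$ would force every $\bar x^{\bar\phi}$ to vanish there, contradicting the fact that the sections in $S_{1}$ globally generate $\mathcal{L}$. Once $\eta\in H^{0}(Q,\mathcal{O}_{Q})$, the connectedness of the rank-two quadric $Q=\V(X_{1}X_{2})$ (its two planar components meet along the line $\V(X_{1},X_{2})$) gives $H^{0}(Q,\mathcal{O}_{Q})=\kk$, so $\eta=\lambda\in\kk^{\times}$ is constant. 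Hence $\bar x^{\bar\phi}=\lambda\,\tau^{*}(\bar x)$ on $S_{1}$, which lifts via the isomorphism $R_{1}\cong S_{1}$ to $\phi=\lambda\,\tau$ on $R_{1}$, yielding $\phi\in\kk^{\times}\tau$.

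The main obstacle is the last step: rigorously establishing the global regularity of $\eta$ on $Q$ and then promoting it to a constant using connectedness of two planes glued along a line, rather than the smoothness or irreducibility arguments available in the rank-three and rank-four cases of \cite{SV99,VVr}. A secondary point requiring care is the identification of $\tau$ with a well-defined element of $\Aut(R_{1})$ (compatibly with the geometric $\tau\in\Aut(Q)$), so that the conclusion $\phi\in\kk^{\times}\tau$ genuinely lives in $\Aut(R)$ rather than merely on the level of $\P(R_{1}^{*})$.
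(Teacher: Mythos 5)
The paper offers no proof of this statement --- it is imported verbatim from \cite[Lemma 2.5]{VVr}, where $Q$ is an irreducible quadric --- so there is nothing internal to compare against; your argument is essentially the one used there and in \cite[Lemma 1.5]{SV99}: push the relation $\omega x-x^{\phi}\omega=0$ into $S_2=H^0(Q,\mathcal{L}\otimes\tau^*\mathcal{L})$, divide by $\tau^*\bar\omega$, and show the resulting ratio $\eta$ is a global regular function, hence a scalar. Your reduction to $S_1\cong R_1$, the regularity argument for $\eta$ via global generation of $\mathcal{L}$, and the use of $H^0(Q,\mathcal{O}_Q)=\k$ for two planes glued along a line are all fine \emph{once the division is legitimate}.

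The genuine gap is precisely the division step, and it is exactly where the rank-two case differs from the rank-three and rank-four cases. You justify dividing by $\tau^*\bar\omega$ with ``$S$ is a domain,'' but the (twisted) homogeneous coordinate ring of a union of two planes is not a domain: in $S(Q)=\k[x_1,\dots,x_4]/(x_1x_2)$ the element $x_1$ is a zerodivisor. A normal $\omega\in R_1$ can restrict to zero on one of the two planes; then $\eta=\bar\omega/\tau^*\bar\omega$ is undefined on that component, the identity $\bar\omega\otimes\tau^*\bar x=\bar x^{\bar\phi}\otimes\tau^*\bar\omega$ is vacuous there, and the relation determines $x^{\phi}$ only modulo the linear form cutting out the other plane. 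This is not a repairable technicality: as stated the proposition actually fails for such $\omega$. In \Cref{PLAlg}(b) (where $\tau=\id$) the element $x_1$ is normal, with $x_1x_2=(\alpha-1)^{-1}x_2x_1$ and $x_1$ commuting with $x_3,x_4$, so $\phi(x_2)=(\alpha-1)^{-1}x_2$ and $\phi(x_i)=x_i$ for $i\neq 2$; since $\alpha\neq 2$ this $\phi$ is not a scalar multiple of the identity, although $\phi\equiv\id$ modulo $x_2$, exactly as the partial information predicts. The same happens with the normal element $x_1$ in \Cref{prop1}(b). So your proof (and the statement) needs the additional hypothesis that $\omega$ does not vanish identically on either plane of $Q$, i.e.\ that $\bar\omega$ is a nonzerodivisor in $S$; under that hypothesis your argument closes the proof, and the remaining delicacies you flag (constancy of $\eta$ on the reducible $Q$) are handled correctly by your connectedness argument.
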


\begin{prop}\cite[Theorem 2.6]{VVr}\label{centralElts}
If the point scheme $P$ of $R$ is not the quadric $Q$, then $\tau \in \Aut(R)$. In this case, $R$ is a Zhang twist of an AS-regular algebra $R'$ that maps onto the (commutative) homogeneous coordinate ring $S(\P^3)$. In particular, $R'$ contains two linearly independent central elements of degree one.
\end{prop}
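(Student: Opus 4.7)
The plan is to proceed in three stages: lift $\tau$ to an algebra automorphism of $R$, form an appropriate Zhang twist $R'$, and then verify that $R'$ is commutative (so that $R' \cong S(\P^3) = \k[X_1,\dots,X_4]$).

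\emph{Stage 1 (lifting $\tau$).} Lemma~\ref{normalElt} supplies a normal element $\omega \in R_1$ (using $P \ne Q$), and Proposition~\ref{normalautom} shows that the associated automorphism $\phi \in \Aut(R)$, defined by $\omega x = x^\phi \omega$, satisfies $\phi \in \k^\times \tau$. Thus $\phi|_{R_1}$ agrees with $\tau$ up to a nonzero scalar, which lifts the projective automorphism $\tau$ of $\P^3$ to $\phi \in \Aut(R)$, establishing $\tau \in \Aut(R)$.

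\emph{Stage 2 (Zhang twist).} Define $R'$ to be the Zhang twist of $R$ that undoes $\phi$; concretely take $R' := R^{\phi^{-1}}$. By Remark~\ref{zhangtwist}, $R'$ is again a quadratic connected graded AS-regular algebra of global dimension four with Hilbert series $(1-t)^{-4}$, with the same point scheme as $R$. Moreover, Remark~\ref{omega}(2) asserts that every $\tau$-skew commutator $u^\tau v - v^\tau u$ lies in $\k \Omega$ in $R$; the Zhang twist converts these into ordinary commutators in $R'$, so $[u,v] \in \k \Omega'$ for all $u, v \in R'_1$, where $\Omega'$ corresponds to $\Omega$ under the twist.

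\emph{Stage 3 (commutativity of $R'$).} I would upgrade the bound ``commutator ideal in degree two lies in $\k\Omega'$'' to ``all commutators vanish'' by exploiting $P \supsetneq Q$. Choose a point $p \in P \setminus Q$; multilinearizing each relation $[X_i, X_j] - c_{ij} \Omega'$ and evaluating at the pair $(p, \sigma(p))$ in the point scheme forces each $c_{ij} = 0$, since $\Omega'$ projects to a scalar multiple of the defining form $X_1 X_2$ of $Q$ and is therefore nonvanishing at $p \notin Q$. Hence all commutators of $R'$ vanish, so $R'$ is commutative; matching Hilbert series then yields $R' \cong S(\P^3)$, and \emph{a fortiori} $R'$ contains (indeed consists of) central elements of degree one.

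The principal obstacle is Stage 3, where the a priori weak control ``commutators lie in $\k\Omega'$'' must be promoted to actual vanishing. This requires a careful interplay between the geometric hypothesis $P \setminus Q \ne \emptyset$, the explicit identification of $\Omega'$ with (a scalar multiple of) the defining form of $Q$, and precise tracking of the point-scheme automorphism of $R'$. A subsidiary technical point is to verify that the Zhang twist by $\phi^{-1}$ uniformly intertwines all twisted commutators with ordinary commutators; this is cleanest when the twist is organized as a twisting system rather than a single automorphism, as in Remark~\ref{zhangtwist}.
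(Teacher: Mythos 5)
Your Stages 1 and 2 follow the paper's proof: Lemma \ref{normalElt} produces a normal element of $R_1$, Proposition \ref{normalautom} identifies the associated automorphism with $\tau$ up to a scalar (so $\tau\in\Aut(R)$), and twisting by $\tau^{-1}$ yields $R'$ with trivial induced automorphism on $Q$ (Remark \ref{zhangtwist}). The problem is Stage 3, and it is not a repairable gap but a wrong target: the claim that $R'$ is commutative and isomorphic to $S(\P^3)$ is contradicted by the paper's own Proposition \ref{PLAlg}. Parts (b)--(e) there list exactly the algebras with $\sigma|_Q=\tau=\id$ and $P\neq Q$, and none is commutative; for instance in (b) one has $x_2x_1=(\alpha-1)x_1x_2$ with $\alpha\neq 2$. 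The concrete flaw in your evaluation argument is that multilinearizing $[X_i,X_j]-c_{ij}\Omega'$ and evaluating at $(p,\sigma(p))$ gives
$$X_i(p)X_j(\sigma(p))-X_j(p)X_i(\sigma(p))=c_{ij}\,\widehat{\Omega'}(p,\sigma(p)),$$
and the left-hand side vanishes only when $\sigma(p)=p$ in $\P^3$. For $p\in P\setminus Q$ the automorphism $\sigma$ restricted to the extra component is in general not the identity (in case (b), $\sigma|_L$ scales $x_1$ by $\alpha-1$), so nonvanishing of $\widehat{\Omega'}$ at $(p,\sigma(p))$ merely determines $c_{ij}$ in terms of the left-hand side; it does not force $c_{ij}=0$.

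What the proposition actually needs is much weaker, and that is all the paper proves: two linearly independent central elements of degree one. These come from Lemma \ref{normalElt} and Proposition \ref{quadline}: when $P\neq Q$ there is a regular normalizing sequence $\{v_1,v_2\}\subset R_1$ cutting out the extra line $L$ (or any two independent elements when $P=\P^3$), and after twisting by $\tau^{-1}$ the span $W'$ of defining relations of $R'$ contains the five-dimensional space spanned by $\{v_i\otimes x-x\otimes v_i : x\in R_1',\ i=1,2\}$, which makes $v_1,v_2$ central. Relatedly, the phrase ``maps onto $S(\P^3)$'' in the statement should be read as in the proof: the induced automorphism of $Q$ for $R'$ is $\tau\circ\tau^{-1}=\id$, so $R'$ surjects onto the \emph{commutative} coordinate ring $S(Q)$; a literal graded surjection onto $S(\P^3)$ would, by comparing Hilbert series, force $R'\cong S(\P^3)$, which is the over-reading that led you to try to prove commutativity. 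To repair Stage 3, replace the evaluation argument by the identification of the normalizing sequence of Proposition \ref{quadline} and the observation that its $\tau$-twisted commutation relations (Remark \ref{omega}) become honest commutation relations in $R'$.
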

\begin{proof}
If $P \ne Q$, then $P$ contains a normal element by Lemma \ref{normalElt}, so by Proposition \ref{normalautom} $\tau \in \Aut(R)$. As a result, one may twist $R$ by $\tau^{-1}$ to obtain an AS-regular algebra $A'$ of $\gldim$ 4 (see, Remark \ref{zhangtwist}). Thus, the automorphism $\tau'$ of $Q$ corresponding to $R'$ is $\tau \circ \tau^{-1}$, which is the identity (cf. \cite[\S 1.3]{SV99}). That is, $R'$ maps onto $S(\P^3)$. 

Now, let $v_1, v_2 \in R_1$ be linearly independent. By Lemma \ref{normalElt}, the span $W'$ of defining relations of $R'$ contains a five-dimensional subspace generated by $\{v_i \otimes x - x \otimes v_i: x \in R_1', i = 1, 2\}$, which proves that $v_1$ and $v_2$, are central.   
\end{proof}

\begin{rmk}
One may characterize any scheme as being reduced or nonreduced, (see e.g., \cite[Example II.9]{EH} and \cite[Definition \S II.3]{Hart}). We denote the reduced scheme of $P$ by $P_{\mbox{red}}$.
\end{rmk}

\begin{prop}\cite[Proposition 2.7]{VVr} \label{quadline}
If the point scheme $P$ of $R$ is not the quadric $Q$, then either
\begin{enumerate}[(a)]
\item	$P=\P^3$
\item	$P=Q \cup L$ where $L$ is a line in $\P^3$ that meets $Q$ in two points (counted with multiplicity), or 
\item	the reduced scheme $P_{red}$ associated to $P$ is the quadric $Q$ but the scheme $P$ contains a double line $L$ of (all) multiple points on $P$, where $L$ corresponds to a line on $Q$ (we denote this situation by $P=Q\uplus L$).		
\end{enumerate}	
	
In cases (b) and (c) there is a regular normalizing sequence $\{v_1,v_2\} \subset R_1$ such that $L=\V(v_1,v_2)$.
\end{prop}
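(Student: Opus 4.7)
The plan is to promote the two central elements of the Zhang twist $R'$ supplied by Proposition \ref{centralElts} to a regular normalizing sequence in $R$, and then read off the point scheme from the line they cut out. Zhang twisting leaves $\QGr$, the Hilbert series, and the point scheme unchanged (Remark \ref{zhangtwist}) and preserves the underlying graded vector space, so the pair $v_1, v_2 \in R'_1$ that is central in $R'$ corresponds under the twist to a pair in $R_1$ that is $\tau$-normal. Since $R$ is a domain this pair is automatically regular, and setting $L = \V(v_1, v_2) \subset \P(R_1^*) = \P^3$ supplies the normalizing sequence required in cases (b) and (c).

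I would next show $L \subseteq P$. Because $v_1$ is a central nonzero element in the AS-regular domain $R'$ of global dimension four, $R'/\langle v_1 \rangle$ is AS-regular of global dimension three with Hilbert series $(1-t)^{-3}$ and is again a domain; the image of $v_2$ there is central and nonzero, hence regular, so $R'/\langle v_1, v_2 \rangle$ is AS-regular of global dimension two with Hilbert series $(1-t)^{-2}$. The point scheme of such an algebra is all of $L \cong \P^1$, and by the functoriality of point schemes under graded surjections $L$ embeds into $P(R') = P(R)$. Combined with the already known inclusion $Q \subseteq P$ coming from $R \twoheadrightarrow S$, this yields $Q \cup L \subseteq P$.

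The final step is the trichotomy. Writing $Q = \V(X_1 X_2)$, the position of $L$ relative to the two planes $\V(X_1)$ and $\V(X_2)$ determines which case occurs. If $L$ lies on neither plane, it meets each exactly once, so $L \cap Q$ consists of two points counted with multiplicity, giving case (b) with $P = Q \cup L$; if $L$ lies on one of the planes, then $L \subset Q$ and we are in the setting of case (c); and if the defining ideal of $P$ collapses because $R'$ is sufficiently close to commutative (for example, when an additional central degree-one element forces the remaining multilinearized relation to vanish identically), then $P = \P^3$, giving case (a). In case (c) I would pass to an adapted basis of $R'_1$ containing $v_1, v_2$, multilinearize the six defining relations, and compute the ideal of $P$ in $\P^3$ to verify that $P_{\mathrm{red}} = Q$ while $L$ appears with multiplicity exactly two. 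The main obstacle will be precisely this scheme-theoretic calculation in case (c)---pinning down the multiplicity of $L$ as two rather than higher, and ruling out extraneous embedded components---which I expect to handle by an explicit relation analysis in the spirit of \cite[\S 2]{VVr}.
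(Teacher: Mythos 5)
Your argument is one-directional: everything you establish amounts to the inclusion $Q\cup L\subseteq P$, and the proposition is really about the reverse inclusion. The forward half is fine in outline --- central elements of the Zhang twist $R'$ become a normalizing pair $\{v_1,v_2\}$ in $R$, regularity follows from $R$ being a domain together with the standard fact (Remark \ref{HypsOnR}, \cite{Le}) that factoring an Auslander-regular, Cohen--Macaulay algebra by a regular normal element of degree one drops the dimension by one and keeps the quotient a domain, and the point scheme of the resulting dimension-two quotient is $L\cong\P^1$, which injects into $P$. But nothing in your proposal rules out the a priori possible configurations $P=Q\cup Q'$ for a second quadric $Q'$, $P=Q\cup C$ for a conic, $P$ containing two lines or extra isolated points, etc. The proposition's content is precisely that the residual scheme of $Q$ in $P$ is either empty, the single line $\V(v_1,v_2)$, or an embedded double line, and your trichotomy paragraph assumes this: you classify cases by the position of $L$ relative to the planes of $Q$, which only makes sense once you already know $P=Q\cup L$. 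The case (a) clause (``if the defining ideal of $P$ collapses\dots'') is a description of an outcome, not an argument for when it occurs.

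The missing ingredient is the determinantal description of $P$: writing the six relations as $M(x)\,x'=0$ for a $6\times 4$ matrix $M$ of linear forms, $P$ is cut out by the fifteen $4\times 4$ minors, which are quartics. Since $Q=\V(X_1X_2)\subseteq P$, each nonzero minor is divisible by $X_1X_2$, so $P\subseteq Q\cup\V(q_1,\dots,q_{15})$ where the $q_i$ are the residual quadrics; one must then use the explicit form of the relations --- five of the six lying in the span of $\{v_i\otimes x-x\otimes v_i\}$ by Lemma \ref{normalElt} and Proposition \ref{centralElts} --- to show that $\bigcap_i\V(q_i)$ is exactly the line $\V(v_1,v_2)$ (or empty, giving $P=\P^3$), and to compute the scheme structure in case (c). This is the calculation carried out in \cite{VVr} (and mirrored in Proposition \ref{PLAlg} of this paper), and it is the part you have deferred. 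Note also that the paper itself offers no proof here; it cites \cite[Proposition 2.7]{VVr}, so any self-contained argument would have to supply this determinantal analysis rather than postpone it.
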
	

\begin{prop} \label{PLAlg}
Suppose that the point scheme $P$ of $R$ is not $Q$. If $\sigma|_Q = \tau = \id$, then there is a choice of generators $x_1, x_2, x_3, x_4$ for $R$ such that $Q=\V(x_1x_2)$ with the following six quadratic defining relations.
\begin{enumerate}[(a)]
\item $R=S(\P^3)=\k[x_1,x_2,x_3,x_4]$ is the polynomial ring on four variables and $(P,\sigma)=(\P^3,\id)$.
\item	$R=\k[x_1,x_2,x_3,x_4]$ with defining relations
$$\begin{matrix} x_3 x_1 = x_1 x_3, & x_4 x_2 = x_2 x_4, \\ x_3 x_2 = x_2 x_3, & x_4 x_1 = x_1 x_4, \\ x_3 x_4 = x_4 x_3, & x_2 x_1 = (\alpha - 1) x_1 x_2,  \end{matrix}$$
where $\alpha \in \k^\times \setminus\{1,2\}$.  In this case, $P=Q\cup L$ where $L=\V(x_3,x_4)$ (so $L$ intersects $Q$ at two distinct points) and $\sigma|_L(x_1,x_2,0,0)=\left( (\alpha-1)x_1, x_2,0,0 \right)$.
\item	$R=\k[x_1,x_2,x_3,x_4]$ with defining relations
$$\begin{matrix} x_3 x_1 = x_1 x_3, & x_2 x_1 = x_1 x_2, \\ x_3 x_2 = x_2 x_3, & x_2 x_4 - x_4 x_2 = x_1 x_2, \\ x_3 x_4 = x_4 x_3, & x_1 x_4 - x_4 x_1 = x_1 x_2.  \end{matrix}$$		
In this case, $P=Q\cup L$ where $L=\V(x_1-x_2,x_3)$ (so $L$ intersects $Q$ at one point with multiplicity two) and $\sigma|_L(x_1,x_1,0,x_4)=\left(x_1,x_1,0, x_1+x_4\right)$.
\item	$R=\k[x_1,x_2,x_3,x_4]$ with defining relations
$$\begin{matrix} x_1 x_2 = x_2 x_1, & x_2 x_3 = x_3 x_2, \\ x_1 x_3 = x_3 x_1, & x_2 x_4 = x_4 x_2, \\ x_1 x_4 = x_4 x_1, & x_3 x_4 - x_4 x_3 = x_1 x_2.  \end{matrix}$$
In this case, $P=Q\uplus L$ where $L=\V(x_1,x_2) \subset Q$ is the line of intersection of the planes of $Q$ and $\sigma \in \mathrm{Aut}(P)$ is uniquely determined by its action on the localized homogeneous coordinate rings of $P$, $S(P)[x_i^{-1}]$,  as follows: on $S(P)[x_1^{-1}]$ and $S(P)[x_2^{-1}]$ we have that $\sigma$ is the identity, but on $S(P)[x_3^{-1}]$ we have $$\sigma(x_1,x_2,x_3,x_4)=\left(x_1,x_2,x_3,x_4-x_3^{-1}x_1x_2\right),$$ and on $S(P)[x_4^{-1}]$ we have $$\sigma(x_1,x_2,x_3,x_4)=\left( x_1,x_2,x_3+x_4^{-1}x_1x_2,x_4 \right).$$
\item	$R=\k[x_1,x_2,x_3,x_4]$ with defining relations
$$\begin{matrix} x_1 x_2 = x_2 x_1, & x_2 x_3 = x_3 x_2, \\ x_1 x_3 = x_3 x_1, & x_3 x_4 = x_4 x_3, \\ x_1 x_4 = x_4 x_1, & x_2 x_4 - x_4 x_2 = x_2 x_1.  \end{matrix}$$
In this case, $P=Q\uplus L$ where $L=\V(x_1,x_3) \subset \V(x_1) \subset Q$ and $\sigma \in \mathrm{Aut}(P)$ is uniquely determined by its action on the localized homogeneous coordinate  rings of $P$, $S(P)[x_i^{-1}]$, as follows: on $S(P)[x_1^{-1}]$ and $S(P)[x_3^{-1}]$ we have that $\sigma$ is the identity but on $S(P)[x_2^{-1}]$ we have $$\sigma(x_1,x_2,x_3,x_4)=\left( x_1,x_2,x_3,x_1+x_4 \right),$$ and on $S(P)[x_4^{-1}]$ we have $$\sigma(x_1,x_2,x_3,x_4)=(x_1,x_2-x_4^{-1}x_2x_1,x_3,x_4).$$
\end{enumerate}
\end{prop}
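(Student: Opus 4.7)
The plan is to leverage the structural results built up so far and to perform a case-by-case analysis following the trichotomy of Proposition \ref{quadline}. Since $\tau = \id$ by hypothesis, Proposition \ref{centralElts} applied with $R' = R$ gives that $R$ itself maps onto $S(\P^3)$ and contains two linearly independent degree-one central elements. By Remark \ref{omega}(2), every commutator $uv - vu$ with $u, v \in R_1$ lies in $\k\Omega$, and by Proposition \ref{OmegaNormal} the element $\Omega$ is normal. Since $Q$ is a rank-two quadric, $\Omega$ factors as a product of two linear forms, so I choose $x_1, x_2 \in R_1$ to be (scalar multiples of) these forms, giving $\Omega = x_1 x_2$ and $Q = \V(x_1 x_2)$. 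The remaining generators $x_3, x_4$ will be chosen from among the central elements and any available normalizing sequence in a case-dependent manner; moreover, if $\omega \in R_1$ is normal, then Proposition \ref{normalautom} combined with $\tau = \id$ gives $\omega x = c x \omega$ for some $c \in \k^\times$, and applying this identity with $x = \omega$ forces $c = 1$ since $R$ is a domain, so every normal linear form is in fact central.

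For case (a), $P = \P^3$, the automorphism $\sigma \in \Aut(\P^3)$ must fix $Q = \V(x_1 x_2)$ pointwise by hypothesis; a direct $4 \times 4$ linear algebra computation shows that the only automorphism of $\P^3$ that fixes a union of two planes pointwise is the identity. Hence $\sigma = \id$, the corresponding twisted homogeneous coordinate ring is $S(\P^3)$ itself, and $R$ is a quotient of $S(\P^3)$ with the same Hilbert series, forcing $R = \k[x_1, x_2, x_3, x_4]$.

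For cases (b) and (c) we are in case (b) of Proposition \ref{quadline}: $P = Q \cup L$ with $L$ a line meeting $Q$ in two points counted with multiplicity. The proposition supplies a regular normalizing sequence $\{v_1, v_2\}$ cutting out $L$; by the observation in the first paragraph both $v_i$ are central, so I put $x_3 = v_1$ and $x_4 = v_2$. The two sub-cases split according to whether $L \cap Q$ consists of two distinct points (case b) or of a single doubled point (case c). In (b) one may arrange the intersection points to be $[1{:}0{:}0{:}0]$ and $[0{:}1{:}0{:}0]$; then $\sigma|_L$ is a rescaling, yielding the skew relation $x_2 x_1 = (\alpha - 1) x_1 x_2$ for some $\alpha \in \k^\times$, with $\alpha \in \{1, 2\}$ excluded because these values produce either a non-domain or an instance of case (a). In (c), $\sigma|_L$ is a unipotent translation fixing one point with multiplicity two, which forces the shear-type relations $[x_i, x_4] = x_1 x_2$ for $i = 1, 2$ after an appropriate change of basis that replaces $v_1$ by $x_1 - x_2$.

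For cases (d) and (e) we are in case (c) of Proposition \ref{quadline}: $P_{\mathrm{red}} = Q$ but $P$ contains a double line $L \subset Q$. The sub-cases correspond to whether $L$ is the singular line of $Q$ where the two planes meet (case d), or a line contained in exactly one of the two planes of $Q$ (case e). In case (d), the symmetry between the two planes allows $x_1, x_2$ and an additional generator $x_3$ to be taken simultaneously central, leaving the sole non-commutator $[x_3, x_4] = x_1 x_2$. In case (e), the asymmetry permits $x_1$ (one defining form of $Q$) and a new central element $x_3$ (vanishing on $L$ but not on $Q$) as the two central generators, producing $[x_2, x_4] = x_2 x_1$. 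Once the defining relations are pinned down, the action of $\sigma$ on each affine chart $S(P)[x_i^{-1}]$ is computed directly by solving the point-module equations on that chart, recovering the stated formulas. The main obstacle will be cases (d) and (e): because $P$ carries non-reduced structure along the line $L$, identifying the correct $\sigma$ on this embedded locus is delicate and requires a localization argument in the spirit of \cite[\S 2]{VVr}, adapted to the lower rank of our quadric.
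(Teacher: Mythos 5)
Your proposal follows essentially the same route as the paper's proof: the trichotomy of Proposition \ref{quadline}, central degree-one elements supplied by Proposition \ref{centralElts} (with $\tau=\id$), Remark \ref{omega} to force the sixth relation to be a scalar multiple of $x_1x_2$, and a chart-by-chart localization to read off $\sigma$. The only substantive omission is the verification that each listed algebra is in fact AS-regular of global dimension four, which the paper supplies by exhibiting each one as an Ore extension of a polynomial ring in three variables.
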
	
\begin{proof}
Our hypotheses imply that $R$ maps onto the (commutative) homogeneous coordinate ring, $S(Q)$.  As a result, if $x_1, x_2, x_3$, and $x_4$ are generators for $S(Q)$ such that $Q=\V(x_1x_2)$, then the defining relations of $R$ must be nonzero linear combinations of $x_1x_2$ and $x_ix_j-x_jx_i$ for all $i,j$, by \Cref{omega}.
\begin{enumerate}[(a)]
\item If $P=\P^3$, then $\sigma \in \mathrm{Aut}(\P^3)$ is linear. Thus, $\sigma = \tau =$ identity and $R=S(\P^3)$.
\item	If $L$ is a line in $\P^3$ that meets $Q$ in two distinct points, then we may choose coordinates $x_1, x_2, x_3, x_4$ such that $L=\V(x_3,x_4)$ and $Q=\V(x_1x_2)$.  By Proposition \ref{centralElts}, we may choose $x_3$ and $x_4$ to be central elements in $R$ which yields five defining relations for $R$.  We know that $x_1x_2-x_2x_1 \neq 0$ in $R$ as $P\neq \P^3$. Thus, $\alpha x_1 x_2 = x_2x_1-x_1x_2$ for some $\alpha \in \k^\times$.  This yields the sixth defining relation, $x_2 x_1 = (\alpha-1) x_1 x_2$. We note that $\alpha \ne 1$, as $R$ is a domain.  However, if $\alpha\in \k^\times\setminus\{1\}$, then the algebra is an Ore extension $\k[x_1,x_3,x_4][x_2;\varphi]$ where $\varphi(x_1)=(\alpha-1)x_1$, $\varphi(x_3)=x_3$ and $\varphi(x_4)=x_4$ and so, the algebra is regular of global dimension four. Moreover, the automorphism $\sigma|_L$ can be computed by factoring out the ideal $x_3R+x_4R$. 
				
\item	If $L$ is a line in $\P^3$ that meets $Q$ in one point counted with multiplicity two, then we choose coordinates $x_1, x_2, x_3, x_4$ such that $L=\V(x_1-x_2,x_3)$ and $Q=\V(x_1x_2)$.  By Proposition \ref{centralElts}, we may choose $x_1-x_2$ and $x_3$ to be central elements in $R$.  This gives seven possible defining relations: $x_3 x_i = x_i x_3$, for $i \in \{1,2,4\}$, and $(x_1-x_2)x_j = x_j(x_1-x_2)$, for $j\in \{1,2,3,4\}$. After eliminating some redundancies in the relations, we have the following four relations: 
\[\begin{matrix}
x_1x_2 = x_2x_1, & x_2x_3 = x_3x_2\\
x_1x_3 = x_3x_1, & x_3x_4 = x_4x_3
\end{matrix}\]
By \Cref{omega}, $\alpha x_1x_2 = x_1x_4-x_4x_1$. For the last relation, observe that $x_1x_4-x_4x_1=x_2x_4-x_4x_2$. This yields the final defining relation of $R$, $\alpha x_1x_2 = x_2x_4-x_4x_2$.  Mapping $x_1 \mapsto \sqrt{\alpha^{-1}}x_1$, $x_2 \mapsto \sqrt{\alpha^{-1}}x_2$, $x_3 \mapsto x_3$, $x_4 \mapsto \sqrt{\alpha}x_4$ gives the defining relations in (c). This algebra is the Ore extension $\k[x_1,x_2,x_3][x_4;\id,\delta]$ where $\delta(x_1)=-x_1x_2$, $\delta(x_2)=-x_1x_2$, $\delta(x_3)=0$; it may be verified that $\delta$ is a derivation and, hence, that $R$ is regular. We compute $\sigma|_L$ by factoring out the ideal $(x_1-x_2)R+x_3R$. 
				
\item	If $L$ is the line of intersection of the planes of $Q$, then there exists a choice of coordinates $x_1, x_2, x_3, x_4$ such that $L=\V(x_1,x_2)$ and $Q=\V(x_1x_2)$, and by \Cref{centralElts}, we may choose $x_1$ and $x_2$ to be central elements in $R$.   Thus, five of the six defining relations are determined.  It can be shown that $\alpha x_1 x_2 = x_3 x_4 - x_4 x_3 \in \k^\times \Omega$ for some $\alpha \in \k^\times$, which gives the final defining relation. Moreover, $x_1 \mapsto \sqrt{\alpha^{-1}}x_1$, $x_2 \mapsto \sqrt{\alpha^{-1}}x_2$, $x_3 \mapsto x_3$, $x_4 \mapsto x_4$ gives the desired defining relations.  This algebra is the Ore extension $\k[x_1,x_2,x_3][x_4;\id,\delta]$ where $\delta(x_1)=0$, $\delta(x_2)=0$, $\delta(x_3)=-x_1x_2$ and so, is regular.
				
To compute the action of the automorphism, we follow the methods in \cite[Proposition 2.6]{SV99} and set $Y_i'=x_i\otimes 1$ and $Z_i'=1\otimes x_i$ for all $i=1,...,4$ in the defining relations of the algebra.  This yields the (commutative) homogeneous coordinate ring $S$ corresponding to the point scheme $P$, $S(P)$, which is isomorphic to the commutative algebra $\k[Y_1',Y_2',Y_3',Y_4',Z_1',Z_2',Z_3',Z_4']$ with defining relations $$\begin{matrix} Y_1' Z_2' = Y_2' Z_1', & Y_2' Z_3' = Y_3' Z_2', \\ Y_1' Z_3' = Y_3' Z_1', & Y_2' Z_4' = Y_4' Z_2', \\ Y_1' Z_4' = Y_4' Z_1', & Y_3' Z_4' - Y_4' Z_3' = Y_1' Z_2'.  \end{matrix}$$
				
The subalgebra generated by the $Y_i'$ (respectively, the $Z_i'$) is the homogeneous coordinate ring of the left (respectively, right) projection of $P$ into $\P^3$ and is isomorphic to $S(P)$.  For each $S(P)[x_j^{-1}]$, set $Y_i=Y_i'/Y_j'$ and $Z_i=Z_i'/Z_j'$ for all $i \neq j$.  On each of these localizations, $\sigma$ is given by $\sigma(Y_i)=Z_i$.
				
For $S(P)[x_j^{-1}]$ where $j=1,2$, $\sigma$ is the identity.  For $S(P)[x_3^{-1}]$, we have $Y_2Z_1=Y_1Z_2$, $Y_4Z_1=Y_1Z_4$, $Y_4Z_2=Y_2Z_4$, $Z_1=Y_1$, $Z_2=Y_2$, $Z_4 = Y_4+Y_1Y_2$.  For $S(P)[x_4^{-1}]$, we have $Y_2Z_1=Y_1Z_2$, $Y_3Z_1=Y_1Z_3$, $Y_3Z_2=Y_2Z_3$, $Z_1=Y_1$, $Z_2=Y_2$, $Z_3=Y_3-Y_1Y_2$. The result follows.
				
\item	If $L$ is a line embedded in one of the planes of $Q$, then there exists a choice of coordinates $x_1, x_2, x_3, x_4$ such that $L=\V(x_1,x_3)$ and $Q=\V(x_1x_2)$.  By Proposition \ref{centralElts}, we may choose $x_1$ and $x_3$ to be central elements in $R$.   Thus, five of the six defining relations are determined and, it can be shown that $\alpha x_1 x_2 = x_2 x_4 - x_4 x_2$ for some $\alpha \in \k^\times$, which gives our final defining relation. Mapping $x_1 \mapsto \alpha^{-1}x_1$, $x_2 \mapsto x_2$, $x_3 \mapsto x_3$, $x_4 \mapsto x_4$ gives the desired defining relations.  This algebra is an Ore extension $\k[x_1,x_2,x_3][x_4;\id,\delta]$ where $\delta(x_1)=0$, $\delta(x_2)=-x_2x_1$, $\delta(x_3)=0$; it may be verified that $\delta$ is a derivation and, hence, that $R$ is regular. Moreover, retaining the same notation as above, the algebra $S(P)$ is isomorphic to the commutative algebra $\k[Y_1',Y_2',Y_3',Y_4',Z_1',Z_2',Z_3',Z_4']$ with defining relations $$\begin{matrix} Y_1' Z_2' = Y_2' Z_1', & Y_2' Z_3' = Y_3' Z_2', \\ Y_1' Z_3' = Y_3' Z_1', & Y_3' Z_4' = Y_4' Z_3', \\ Y_1' Z_4' = Y_4' Z_1', & Y_2' Z_4' - Y_4' Z_2' = Y_2' Z_1'.\end{matrix}$$				
				
For $S(P)[x_j^{-1}]$ where $j=1,3$, $\sigma$ is the identity.  For $S(P)[x_2^{-1}]$, we have $Y_3Z_1=Y_1Z_3$, $Y_4Z_1=Y_1Z_4$, $Y_4Z_3=Y_3Z_4$, $Z_1=Y_1$, $Z_3=Y_3$, $Z_4=Y_1+Y_4$.  For $S(P)[x_4^{-1}]$, we have $Y_2Z_1=Y_1Z_2$, $Y_3Z_1=Y_1Z_3$, $Y_3Z_2=Y_2Z_3$, $Z_1=Y_1$, $Z_3=Y_3$, $Z_2=Y_2-Y_2Y_1$.  The result follows.
\end{enumerate}	
\end{proof}	

\begin{cor}\cite[Corollary 2.13]{VVr}\label{tausigma}
The map $\tau \in \Aut(P)$ and it commutes with $\sigma$ on $P$.
\end{cor}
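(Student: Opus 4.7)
The plan is to extend $\tau$ from an automorphism of $Q$ to a graded algebra automorphism of $R$ via Proposition \ref{centralElts}, and then extract both statements from a single observation: any graded algebra automorphism of $R$ preserves the relation space $W\subset R_1\otimes R_1$, and the point scheme is cut out (with its shift) by the multilinearization of $W$.

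First, since $P\neq Q$, Proposition \ref{centralElts} lifts $\tau$ to a graded algebra automorphism of $R$, which I continue to denote by $\tau$. Its restriction to $R_1$ is a linear automorphism, inducing an automorphism of $\P(R_1^*)\cong\P^3$ (still denoted $\tau$). Because $\tau$ is a graded algebra automorphism and $R=T(V)/\langle W\rangle$ with $W\subset V\otimes V$ six-dimensional, one has $(\tau\otimes\tau)(W)=W$; this uses that $W=I\cap(R_1\otimes R_1)$, where $I$ is the defining ideal, which $\tau$ preserves.

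Next, recall the point scheme $P$ and its automorphism $\sigma$ are encoded by the graph $\Gamma_\sigma\subset\P^3\times\P^3$, which is the common vanishing locus of the multilinearizations of the six generators of $W$; then $P$ is the image of $\Gamma_\sigma$ under either projection. The preservation $(\tau\otimes\tau)(W)=W$ means the diagonal action of $(\tau,\tau)$ on $\P^3\times\P^3$ preserves $\Gamma_\sigma$ as a closed subscheme. Projecting to either factor gives $\tau(P)=P$ as subschemes of $\P^3$, so $\tau\in\Aut(P)$. Commutativity with $\sigma$ follows from the same preservation: for any $p\in P$, the point $(p,\sigma(p))$ lies in $\Gamma_\sigma$, hence $(\tau(p),\tau(\sigma(p)))$ does too, which forces $\tau(\sigma(p))=\sigma(\tau(p))$. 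Thus $\tau\sigma=\sigma\tau$ on $P$.

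The substantive content of the corollary is really produced by Proposition \ref{centralElts}, which supplies $\tau\in\Aut(R)$; everything afterward is a formal consequence of the point-scheme construction being equivariant under graded algebra automorphisms. Consequently, I do not anticipate a genuine obstacle here, only the bookkeeping care of distinguishing $\tau$ in its four incarnations (as an automorphism of $Q$, of $R$, of $\P^3$, and of $P$) and checking that preserving the ideal $\langle W\rangle$ actually forces preservation of the quadratic slice $W$, which is immediate from the grading.
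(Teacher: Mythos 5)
Your proposal is correct and follows the standard route: the paper gives no proof of its own here (it simply cites \cite[Corollary 2.13]{VVr}), and the argument there is exactly the one you give --- lift $\tau$ to a graded automorphism of $R$ via Proposition \ref{centralElts} (valid since $P\neq Q$ is the standing hypothesis of this section), observe that $(\tau\otimes\tau)(W)=W$, and deduce equivariance of the graph $\Gamma_\sigma$ and hence both $\tau\in\Aut(P)$ and $\tau\sigma=\sigma\tau$ on $P$. No gaps.
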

	
\begin{prop}\label{PLTwist}
The regular algebra $R$ is isomorphic to one of the following algebras if and only if the point scheme $P$ of $R$ is not the quadric $Q$.

\begin{enumerate}[(a)]
    
    \item   $R=\k[x_1,x_2,x_3,x_4]$ with defining relations $x_i^\tau x_j=x_j^\tau x_i$, for $1\leq i,j\leq 4$ for all $\tau \in \Aut(\P^3)$.  In this case, $R$ is a twist of the polynomial ring $S(\P^3)$ by $\tau$ so $(P,\sigma)=(\P^3,\tau)$.

    \item   $R=\k[x_1,x_2,x_3,x_4]$ with defining relations $$\begin{matrix} x_3^\tau x_1 = x_1^\tau x_3, & x_4^\tau x_2 = x_2^\tau x_4, \\ x_3^\tau x_2 = x_2^\tau x_3, & x_4^\tau x_1 = x_1^\tau x_4, \\ x_3^\tau x_4 = x_4^\tau x_3, & x_2^\tau x_1 = (\alpha - 1) x_1^\tau x_2,  \end{matrix}$$ where $\alpha \in \k^\times\setminus \{1,2\}$ and $\tau$ is given (with respect to a basis dual to $\{x_1, x_2, x_3, x_4\}$) by $$\tau \in \k^\times \begin{pmatrix} 1 & 0 & 0 & 0 \\ 0 & t_{22} & 0 & 0 \\ 0 & 0 & t_{33} & t_{34} \\ 0 & 0 & t_{43} & t_{44} \end{pmatrix},$$ where $t_{22} \in \k^\times$, $t_{ij}\in \k$ for $i,j\in \{3,4\}$, and $t_{33}t_{44}-t_{34}t_{43} \neq 0$.  In this case, $P=Q \cup L$ where $L=\V(x_3,x_4)$ (so $L$ intersects $Q$ at two distinct points), $\sigma|_Q=\tau$, and $\sigma|_L(x_1,x_2,0,0)=\tau\left( (\alpha-1)x_1,x_2,0,0\right)$.
                
    \item   $R=\k[x_1,x_2,x_3,x_4]$ with defining relations $$\begin{matrix} x_3^\tau x_1 = x_1^\tau x_3, & x_2^\tau x_1 = x_1^\tau x_2, \\ x_3^\tau x_2 = x_2^\tau x_3, & x_2^\tau x_4 - x_4^\tau x_2 = x_1^\tau x_2, \\ x_3^\tau x_4 = x_4^\tau x_3, & x_1^\tau x_4 - x_4^\tau x_1 = x_1^\tau x_2,  \end{matrix}$$ where $\tau$ is given (with respect to a basis dual to the $x_i$) by $$\tau \in \k^\times \begin{pmatrix} 1 & 0 & 0 & 0 \\ 0 & 1 & 0 & 0 \\ t_{31} & -t_{31} & t_{33} & 0 \\ t_{41} & t_{42} & t_{43} & 1 \end{pmatrix} \text{ or } \tau \in \k^\times \begin{pmatrix} 0 & 1 & 0 & 0 \\ 1 & 0 & 0 & 0 \\ t_{31} & -t_{31} & t_{33} & 0 \\ t_{41} & t_{42} & t_{43} & 1 \end{pmatrix},$$ where $t_{33} \in \k^\times$, $t_{ij}\in \k$.  In this case, $P=Q \cup L$ where $L=\V(x_1-x_2,x_3)$ (so $L$ intersects $Q$ at one point with multiplicity two), $\sigma|_Q=\tau$, and $\sigma|_L(x_1,x_1,0,x_4)=\tau(x_1,x_1,0,x_1+x_4)$.
    
    \item   $R=\k[x_1,x_2,x_3,x_4]$ with defining relations $$\begin{matrix} x_1^\tau x_2 = x_2^\tau x_1, & x_2^\tau x_3 = x_3^\tau x_2, \\ x_1^\tau x_3 = x_3^\tau x_1, & x_2^\tau x_4 = x_4^\tau x_2, \\ x_1^\tau x_4 = x_4^\tau x_1, & x_3^\tau x_4 - x_4^\tau x_3 = x_1^\tau x_2, \end{matrix}$$ where $\tau$ is given (with respect to a basis dual to the $x_i)$ by $$\tau \in \k^\times \begin{pmatrix} t_{11} & 0 & 0 & 0 \\ 0 & t_{22} & 0 & 0 \\ t_{31} & t_{32} & t_{33} & t_{34} \\ t_{41} & t_{42} & t_{43} & t_{44} \end{pmatrix} \text{ or } \tau \in \k^\times \begin{pmatrix} 0 & t_{12} & 0 & 0 \\ t_{21} & 0 & 0 & 0 \\ t_{31} & t_{32} & t_{33} & t_{34} \\ t_{41} & t_{42} & t_{43} & t_{44} \end{pmatrix} $$ where $t_{ij} \in \k$ and $t_{12}t_{21}+t_{11}t_{22}=t_{33} t_{44}-t_{34} t_{43} \neq 0$.  In this case, $P=Q\uplus L$ where $L=\V(x_1,x_2) \subset Q$ is the line of intersection the planes of $Q$, $\sigma|_Q=\tau$, and $\sigma \in \mathrm{Aut}(P)$ is uniquely determined by its action on the local rings $S(P)[x_i^{-1}]$, where $S(P)$ is the homogeneous coordinate ring of $P$ is as follows: on $S(P)[x_1^{-1}]$ and $S(P)[x_2^{-1}]$ we have that $\sigma = \tau^{-1}$, but on $S(P)[x_3^{-1}]$ we have $$\sigma(x_1,x_2,x_3,x_4)=\tau^{-1}\left(x_1,x_2,x_3,x_4-x_3^{-1}x_1x_2\right),$$ and on $S(P)[x_4^{-1}]$ we have $$\sigma(x_1,x_2,x_3,x_4)=\tau^{-1}\left( x_1,x_2,x_3+x_4^{-1}x_1x_2,x_4 \right).$$
                
    \item   $R=\k[x_1,x_2,x_3,x_4]$ with defining relations
	$$\begin{matrix} x_1^\tau x_2 = x_2^\tau x_1, & x_2^\tau x_3 = x_3^\tau x_2, \\ x_1^\tau x_3 = x_3^\tau x_1, & x_3^\tau x_4 = x_4^\tau x_3, \\ x_1^\tau x_4 = x_4^\tau x_1, & x_2^\tau x_4 - x_4^\tau x_2 = x_2^\tau x_1.  \end{matrix}$$ where $\tau$ is given (with respect to a basis dual to the $x_i)$ by $$\tau \in \k^\times \begin{pmatrix} 1 & 0 & 0 & 0 \\ 0 & t_{22} & 0 & 0 \\ t_{31} & 0 & t_{33} & 0 \\ t_{41} & t_{42} & t_{43} & 1 \end{pmatrix}$$ where $t_{ij}\in \k$ for $i\neq j$ and $t_{22},t_{33} \in \k^\times$. 
    In this case, $P=Q\uplus L$ where $L=\V(x_1,x_3) \subset \V(x_1) \subset Q$, $\sigma|_Q=\tau$, and $\sigma \in \mathrm{Aut}(P)$ is uniquely determined by its action on the local rings $S(P)[x_i^{-1}]$, where $S(P)$ is the homogeneous coordinate ring of $P$ is as follows: on $S(P)[x_1^{-1}]$ and $S(P)[x_3^{-1}]$ we have that $\sigma=\tau^{-1}$, but on $S(P)[x_2^{-1}]$ we have $$\sigma(x_1,x_2,x_3,x_4)=\tau^{-1}\left( x_1,x_2,x_3,x_1+x_4 \right),$$ and on $S(P)[x_4^{-1}]$ we have $$\sigma(x_1,x_2,x_3,x_4)=\tau^{-1}(x_1,x_2-x_4^{-1}x_2x_1,x_3,x_4).$$
    
\end{enumerate}
\end{prop}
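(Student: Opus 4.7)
The plan is to derive this general statement from its $\tau = \id$ analogue, Proposition \ref{PLAlg}, by invoking the Zhang-twist reduction of Proposition \ref{centralElts}. That result guarantees that any $R \in \mathcal{R}(Q,\tau)$ with $P \ne Q$ is a Zhang twist of an AS-regular algebra $R'$ mapping onto $S(\P^3)$, the twist being implemented by the lift of $\tau$ to an automorphism of $R'$. Consequently every such $R$ arises from one of the five presentations (a)--(e) of Proposition \ref{PLAlg} together with a choice of $\tau$, and the proof reduces to (i) rewriting the defining relations of each $R'$ under the twist, (ii) characterizing which $\tau \in \Aut(\P^3)$ are admissible in each case, and (iii) computing the resulting $\sigma$ on $P$.

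For step (i), the quadratic relations transform in a uniform way: the twist sends each monomial $x_i x_j$ appearing in a relation of $R'$ to $x_i^\tau x_j$, with $x_i^\tau := \tau(x_i)$, producing precisely the relations displayed in (a)--(e). For step (iii), since $P$ itself is Zhang-twist invariant by Remark \ref{zhangtwist}, the new automorphism $\sigma$ on each localized coordinate ring $S(P)[x_i^{-1}]$ is obtained from the formulas in Proposition \ref{PLAlg} by composing with $\tau^{-1}$ on the appropriate side, accounting for the shift in multiplication; in case (a) where $\sigma_{R'}=\id$, this reduces to $\sigma = \tau$.

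Step (ii) is the main technical task. For $\tau$ to define a valid twist the matrix must stabilize the six-dimensional relations space $W' \subset V^{\otimes 2}$ of $R'$, equivalently it must lie in $\Aut(R')$; by Corollary \ref{tausigma} it must also commute with $\sigma|_P$. Writing $\tau$ as a $4 \times 4$ matrix with respect to the generators $x_1,x_2,x_3,x_4$ and imposing stability of each quadratic relation of $R'$ produces an explicit linear system on the matrix entries, whose solution set is the block-form stated in each case. The nondegeneracy conditions (e.g.\ $t_{33}t_{44}-t_{34}t_{43} \ne 0$ in (b), $t_{11}t_{22}+t_{12}t_{21}=t_{33}t_{44}-t_{34}t_{43}\ne 0$ in (d)) enforce $\tau \in \Aut(\P^3)$, while the two alternative block-forms in (c) and (d) reflect whether $\tau$ fixes or swaps the two planes of $Q$.

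For the converse direction, given an algebra $R$ presented as in (a)--(e), one identifies it as the Zhang twist by $\tau$ of the corresponding algebra in Proposition \ref{PLAlg}; regularity, the Hilbert series $(1-t)^{-4}$, and the property $P \ne Q$ all descend from the untwisted algebra via Remark \ref{zhangtwist}. The anticipated main obstacle is the case-by-case matrix analysis needed to pin down the admissible $\tau$, particularly in cases (c)--(e) where the noncommutative relation forces off-diagonal entries of $\tau$ to satisfy linear constraints coupling the two halves of the matrix; this is routine linear algebra but requires care, especially in keeping track of when the swap-of-planes block is allowed.
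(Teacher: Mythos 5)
Your proposal is correct and follows essentially the same route as the paper: reduce to the $\tau=\id$ presentations of Proposition \ref{PLAlg} via the Zhang-twist structure of Proposition \ref{centralElts}, then determine the admissible $\tau$ by requiring it to preserve the span of the defining relations (equivalently, using Corollary \ref{tausigma}, to lie in $\Aut(Q)\cap\Aut(L)$ and commute with $\sigma$), and finally rewrite the relations and $\sigma$ under the twist. The only cosmetic difference is that the paper uses the geometric commutativity condition for cases (a)--(c) and the relation-span stabilization for (d)--(e), whereas you propose the latter uniformly; the two conditions cut out the same matrices.
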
	
	
\begin{proof}
Using methods similar to \cite[Theorem 2.8]{SV99}, we classify the graded degree zero automorphisms of the algebras listed in Proposition \ref{PLAlg}.  Applying Corollary \ref{tausigma} to (a), (b) and (c) results in a matrix for $\tau$ that is determined by the hypotheses that $\tau\in \Aut(Q) \cap \Aut(L)$ and $\tau \mid_L \circ \ \sigma \mid_L = \sigma \mid_L \circ \ \tau \mid_L$ (since $\sigma|_Q=\id$).  The result follows by a computation, using Wolfram's Mathematica.
    
For (d) and (e), we apply $\tau$ to the defining relations of the respective algebras in Proposition \ref{PLAlg} rather than working with the commutativity of of $\tau$ and $\sigma$.
    
    In the case of (d), any element of $\Aut(Q) \cap \Aut(L)$ may be given (with respect to a basis dual to the $x_i$) by $$\tau \in \k^\times \begin{pmatrix} t_{11} & t_{12} & 0 & 0 \\ t_{21} & t_{22} & 0 & 0 \\ t_{31} & t_{32} & t_{33} & t_{34} \\ t_{41} & t_{42} & t_{43} & t_{44} \end{pmatrix},$$ where $t_{ij} \in \k$, $t_{11}t_{21}=t_{12}t_{22}=0$ and $(t_{12}t_{21}-t_{11}t_{22})(t_{34}t_{43}-t_{33}t_{44}) \neq 0$.  This matrix maps the span of the defining relations of the algebra back to itself if and only if $t_{12}t_{21}+t_{11}t_{22}+t_{34}t_{43}-t_{33}t_{44}=0$, so the result follows.

    In the case of (e), any element of $\Aut(Q) \cap \Aut(L)$ may be given (with respect to a basis dual to the $x_i$) by $$\tau \in \k^\times \begin{pmatrix} t_{11} & 0 & 0 & 0 \\ 0 & t_{22} & 0 & 0 \\ t_{31} & 0 & t_{33} & 0 \\ t_{41} & t_{42} & t_{43} & t_{44} \end{pmatrix},$$ where $t_{ij} \in \k$ for $i \neq j$ and $t_{ii} \in \k^\times$.  This matrix maps the span of the defining relations of the algebra back to itself if and only if $t_{11}t_{22} = t_{22} t_{44}$, so the result follows.
    
\end{proof}	
	

\bigskip

\section{Regular Algebras whose Point Scheme is the Union of Two Planes}\label{sec3}

To motivate this section, we begin with an example from \cite[Lemma 5.1.1]{HTDiss} where the second author showed that there exist examples of AS-regular algebras of global dimension four such that their point schemes is exactly a rank-two quadric.

\begin{ex}\cite[Lemma 5.1.1]{HTDiss}\label{hung}
Consider the $\Bbbk$-algebra $A$ defined by $T(V)/W$ where $T(V)$ is the tensor algebra, $V$ is the vector space given by $V = \Bbbk x_1 \oplus \Bbbk x_2 \oplus \Bbbk x_3 \oplus \Bbbk x_4$ and $W$ is the ideal generated by the six relations given below.
\begin{align*}
x_{1} x_{2} - x_{2}x_{1}, \hspace{4mm}
& x_{2} x_{3} + x_{3}x_{2},\\
x_{1} x_{3} - x_{3}x_{1}, \hspace{4mm}
& x_{2} x_{4} + x_{4}x_{2}, \\
x_{1} x_{4} - x_{4}x_{1},\hspace{4mm}
&x_{3} x_{4} - x_{4}x_{3} +  x_{1} x_{2}.  \\
\end{align*}
\end{ex}

The point scheme of $A$ is exactly a rank-two quadric. In fact, Proposition \ref{prop1} below shows that when the point scheme of an algebra, satisfying the hypotheses of \Cref{HypsOnR}, is exactly a rank-two quadric, then a normal degree-one element always exists in $R$.

\begin{prop}\label{prop1}
If the point scheme $P$ of $R$ is $Q$, then there exists generators $x_1, \dots, x_4$ for $R$ such that $P = Q = \V(x_1x_2)$ and $R$ is isomorphic to one of the following algebras.

\begin{enumerate}[(a)]

\item   $R=\k[x_1,x_2,x_3,x_4]$ with defining relations

\begin{align*}
\alpha x_1^{\tau} x_2 = x_2^{\tau} x_1, \hspace{4mm}
& x_2^{\tau} x_3 = x_3^{\tau} x_2, \\ 
x_1^{\tau} x_3 = x_3^{\tau} x_1, \hspace{4mm}
&x_2^{\tau} x_4 = x_4^{\tau} x_2, \\ 
x_1^{\tau} x_4 = x_4^{\tau} x_1, \hspace{4mm}
& x_3^{\tau} x_4 - x_4^{\tau} x_3 = \beta x_1^{\tau} x_2,  \end{align*}

where $\alpha,\beta \in \k^\times$ and $\alpha\neq 1$.

\item   $R=\k[x_1,x_2,x_3,x_4]$ with defining relations 

\begin{align*}
x_1^\tau x_2 = x_2^\tau x_1, \hspace{4mm} & x_1^\tau x_3 = x_3^\tau x_1,\\  x_2^\tau x_3 - x_3^\tau x_2 = \alpha x_1^\tau x_2, \hspace{4mm} & x_1^\tau x_4 - x_4^\tau x_1 = \beta x_1^\tau x_2, \\x_2^\tau x_4 = x_4^\tau x_2, \hspace{4mm} & x_3^\tau x_4 = x_4^\tau x_3,
\end{align*}

where $\alpha, \beta \in \k^\times$.

\end{enumerate}

\begin{proof}

In this proof, we retain notation used in the proof of \Cref{OmegaNormal}.

\begin{enumerate}[(a)]

\item   By \Cref{omega}, we may choose $x_1, x_2, x_3$ such that $x_1^{\tau} x_3 = x_3^{\tau} x_1$ and $x_2^{\tau} x_3 = x_3^{\tau} x_2$ and moreover, we take $\Omega =\delta x_1^{\tau} x_2 $. Thus, we write $\alpha x_1^{\tau} x_2 = x_2^{\tau} x_1$ where $\alpha = 1-\alpha_{12}\delta$, $x_i^{\tau} x_4 - x_4^{\tau} x_i = \alpha_i x_1^{\tau} x_2$, and $i\in \{1,2,3\}$ with $\alpha_i \in \k$.  

If $\alpha = 1$, this algebra is isomorphic to the algebra $A=\k[y_1,y_2,y_3,y_4]$ with defining relations $y_1y_3=y_3y_1$, $y_2y_3=y_3y_2$, $y_1y_2=y_2y_1$ and $y_i y_4 - y_4 y_i = \alpha_i y_1 y_2$, where $i\in \{1,2,3\}$ and $\alpha_i \in \k$. The algebra $A$ is an Ore-extension of a polynomial ring on three variables given by the automorphism $\sigma(y_i)=0$, for all $i$, and the derivation $\gamma(y_1)=\gamma(y_2)=0$, $\gamma(y_3)=-\beta y_1y_2$ .  If, furthermore, $\alpha_i = 0$, for all $i$, then $A$ is the polynomial ring on four generators and $P=\P^3$.  If $\alpha_i \neq 0$ for some $i$, then $\alpha_i y_j - \alpha_j y_i$ are central, for $i,j \in \{1,2,3\}$ and so, by \Cref{quadline}, $Q \subset P$, which contradicts our hypothesis. Hence, we may assume $\alpha \neq 1$.  We rechoose $x_4$ such that $x_1^{\tau}x_4=x_4^{\tau}x_1$ and $x_2^{\tau}x_4 = x_4^{\tau} x_2$.  If, additionally, $x_3^{\tau} x_4 = x_4^{\tau} x_3$, then $R$ is a twist of the polynomial ring and its point scheme is not $Q$. Therefore, $x_3^{\tau}x_4 - x_4^{\tau} x_3 = \beta x_1^{\tau} x_2$, for $\beta \in \k^\times$.

\item If we choose $x_1,x_2,x_3,x_4$ such that $x_1^\tau x_2 = x_2^\tau x_1$, $x_1^\tau x_3 = x_3^\tau x_1$, $x_2^\tau x_3 - x_3^\tau x_2 = \alpha_{23} x_1^\tau x_2$, $x_2^\tau x_4 = x_4^\tau x_2$, and $x_3^\tau x_4 = x_4^\tau x_3$, where $\alpha_{23} \in \k^\times$, then the sixth relation is of the form $x_1 x_4 - x_4 x_1 = \alpha_{14} x_1x_2$, where $\alpha_{14} \in \k$. We write these relations as $Mx = 0$ where $M$ is a $6 \times 4$ matrix and $x^T = (x_1, x_2, x_3, x_3)$ and obtain the reduced scheme $P_{red}=Q$ where the ideal of $P_{red}$ is given by fifteen polynomials. Now, to determine the existence of points with multiplicity greater than one, by \cite[Definition \S I.5 \& Theorems 5.1 \& 5.3\footnote{While \cite[Theorem 5.1]{Hart} pertains to affine varieties, we know, using \cite[Corollary 2.3]{Hart}, that a projective variety can be covered by open sets which are then homeomorphic to affine varieties.}]{Hart}, we compute the $1\times 1$ minors of the Jacobian matrix of the fifteen polynomials of the ideal of $P_{red}$. This yields the possible points of higher multiplicity. 

If $\alpha_{14} \neq 0$, then the $1\times 1$ minors all vanish on $Q = \V(x_1,x_2)$ which implies that points of the form $(0, 0, a, b) \in \P^3$ might be of higher multiplicity. Next, we apply Bertini's Theorem \cite[Theorem 8.18]{Hart}, and intersect $Q$ with a complementary-dimensional scheme, that is, a 1-dimensional scheme, which is given by the line $L_n=\V(x_1-x_2,n x_3 - x_4)$, where $n\in \P^1$. The point of intersection of $L_n$ and $\V(x_1,x_2)$ is $(0,0,1,n)$, when $n \in \k$, or $(0,0,0,1)$, when $n$ is a point at infinity. We determine the vector space dimension of the localization of the local ring $\mathcal{O}_{Q \cap L_n}$ at each of these points to compute their multiplicities \cite{Hart}. We obtain a ring isomorphic to $\k[x]/\langle x^2 \rangle$ and so, the point has multiplicity two. This implies that the points on $\V(x_1,x_2)$ are multiple points only as a consequence of being in the intersection of two irreducible components of $P$, $\V(x_1)$ and $\V(x_2)$, explicitly. Taking $\alpha=\alpha_{23}$ and $\beta=\alpha_{14}$ gives the defining relations as described.  This algebra is an extension of the polynomial ring $\k[x_2,x_3,x_4]$ by the normal element $x_1$ and is thus, regular.

Otherwise, $\alpha_{14}=0$, and we show that, in this case, we have a contradiction. Observe that the $1 \times 1$ minors of the corresponding Jacobian matrix vanish on $\V(x_1,x_2)$ and on $\V(x_1,x_4)$.  Using $L_n$ to examine the multiplicity of the points $\V(x_1,x_2)$ as before, we find that all points on $\V(x_1,x_2)$ have multiplicity two (as a consequence of the intersection of the irreducible components of $P$) except the point $(0,0,1,0)$, which has multiplicity three. To further analyze the point $(0, 0, 1, 0)$, consider the points on $\V(x_1,x_4)\setminus\V(x_1,x_2)$ that are of the form $(0,1,n,0)$, where $n\in \k$.  In a similar manner as above, we examine the multiplicity of these points using the line $K_n=\V(x_3- n x_2,x_1-x_4)$.  In doing so, we find that the coordinate ring of $P \cap K_n$ is isomorphic to $\k[x]/\langle x^2 \rangle$ and so, the points have multiplicity two. This implies that the point $(0, 0, 1, 0)$ which lies on the line $\V(x_1,x_4)$ is a point on an embedded line in $Q$. Thus, the point scheme of $R$ is $Q \uplus \V(x_1,x_4)$, which contradicts our hypothesis that $P = Q$.

\end{enumerate}

\end{proof}
\end{prop}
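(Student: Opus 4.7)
The strategy is to follow the framework of the analogous classifications in \cite{SV99,VVr}, adapted to the rank-two quadric setting. The starting data are: a nonzero $\Omega \in R_2$ with $R/\langle\Omega\rangle \cong S$ (Remark \ref{omega}(2)); the normality of $\Omega$ in $R$ (Proposition \ref{OmegaNormal}); and the identity $u^\tau v - v^\tau u \in \k\Omega$ for all $u,v \in R_1$. I would fix generators $x_1,x_2$ so that $Q = \V(x_1 x_2)$, observe that $x_1^\tau x_2$ vanishes on $\Gamma_\tau(Q)$ and is nonzero since $R$ is a domain, and conclude $\Omega = \delta\, x_1^\tau x_2$ for some $\delta \in \k^\times$. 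Completing to a basis $\{x_1,x_2,x_3,x_4\}$ of $R_1$ and expanding the six defining relations as $x_i^\tau x_j - x_j^\tau x_i = \alpha_{ij}\,\Omega$ with $\alpha_{ij} \in \k$, the $(1,2)$-relation rearranges as $x_2^\tau x_1 = \alpha\, x_1^\tau x_2$ with $\alpha = 1-\alpha_{12}\delta$, and the proof will split according to whether $\alpha\ne 1$ or $\alpha=1$.

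When $\alpha\ne 1$, I would, mirroring the proof of Proposition \ref{PLAlg}(b), successively adjust $x_3$ and then $x_4$ by multiples of $x_1$ and $x_2$ so that both $x_3^\tau$ and $x_4^\tau$ commute with each of $x_1$ and $x_2$; the assumption $\alpha\ne 1$ is precisely what permits absorbing these corrections. The only remaining parameter is the coefficient $\beta$ of the $(3,4)$-relation $x_3^\tau x_4 - x_4^\tau x_3 = \beta\, x_1^\tau x_2$, and $\beta=0$ would make $R$ a Zhang twist of the polynomial ring with point scheme strictly containing $Q$ by Proposition \ref{PLTwist}, contradicting $P=Q$. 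So $\beta \in \k^\times$, giving case (a); regularity follows from recognizing $R$ as an Ore extension of $\k[x_1,x_2,x_3]$. When $\alpha=1$, the same Zhang-twist argument forces some other $\alpha_{ij}$ to be nonzero; relabelling and standard coordinate adjustments bring the six relations into the shape of (b), in which the only nonzero commutators are the $(2,3)$- and $(1,4)$-relations with coefficients $\alpha$ and $\beta$. The resulting algebra is then an Ore extension of $\k[x_2,x_3,x_4]$ by the normal element $x_1$, and hence AS-regular of global dimension four.

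The main obstacle will be showing that, in case (b), the coefficient $\beta$ of the $(1,4)$-relation is forced to be nonzero by $P=Q$. I would write the six relations as $Mx=0$ with $M$ a $6\times 4$ matrix linear in the generators, read off the defining ideal of $P_{\text{red}} = Q$ from its maximal minors, and apply the Jacobian criterion together with Bertini's theorem to locate candidate embedded components. Multiplicities at candidate points of $\V(x_1,x_2)$ are then computed by intersecting $P$ with complementary-dimensional test lines $L_n=\V(x_1-x_2,\, nx_3-x_4)$: a generic intersection produces the local ring $\k[x]/\langle x^2\rangle$, accounting for the expected double-point structure where the two planes of $Q$ meet. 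If $\beta=0$, however, an analogous computation with test lines $K_n=\V(x_3-nx_2,\, x_1-x_4)$ through generic points of $\V(x_1,x_4)$ yields multiplicity three there, so that $\V(x_1,x_4)$ sits as an embedded line in $P$ and $P = Q\uplus \V(x_1,x_4) \ne Q$, contradicting the hypothesis. This local intersection-theoretic calculation, together with the routine but careful verification that each algebra is the claimed Ore extension, is the technical core of the proof.
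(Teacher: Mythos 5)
Your proposal follows essentially the same route as the paper: the same dichotomy on whether $x_2^\tau x_1 = \alpha\, x_1^\tau x_2$ has $\alpha\ne 1$ or $\alpha=1$, the same coordinate adjustments and twist-of-the-polynomial-ring contradiction in case (a), and the same technical core in case (b) --- writing the relations as $Mx=0$, applying the Jacobian criterion and Bertini's theorem, and computing multiplicities via the test lines $L_n=\V(x_1-x_2,\,nx_3-x_4)$ and $K_n=\V(x_3-nx_2,\,x_1-x_4)$ to rule out $\beta=0$ by exhibiting the embedded line $\V(x_1,x_4)$. The only cosmetic difference is that the paper rules out $\alpha=1$ inside case (a) via central degree-one elements and Proposition \ref{quadline} rather than by ``relabelling into the shape of (b),'' but the substance is identical.
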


\begin{rmk}
If $P = Q$ and there is a normal element $z \in R_1$, then $z$ is also regular as $R$ is a domain. Moreover, by \cite[Theorem 8.16(ii)]{ATV2}, the normal element $z^{\tau} \in R_1$ for an automorphism $\tau$ is central in $R^{\tau}$. As a result, in this case, $R$ is a central extension of an AS-regular of global dimension three. These central extensions are well-understood algebras (see e.g., \cite{lBSV}), and by Proposition \ref{prop1}, the algebra $R$ will always be such an algebra.
\end{rmk}

\begin{ex}
The map $\tau$ in Proposition \ref{prop1}(a) is given by the automorphism $$\tau \in \k^\times \begin{pmatrix} t_{11} & 0 & 0 & 0 \\ 0 & t_{22} & 0 & 0 \\ 0 & 0 & t_{33} & t_{34} \\ 0 & 0 & t_{43} & t_{44} \end{pmatrix},$$ where $t_{ij} \in \k$ and $t_{11}t_{22}=t_{33}t_{44}-t_{34}t_{43} \neq 0$.  If $\alpha = -1$, then $\tau$ may also be given by $$\tau \in \k^\times \begin{pmatrix} 0 & t_{12} & 0 & 0 \\ t_{21} & 0 & 0 & 0 \\ 0 & 0 & t_{33} & t_{34} \\ 0 & 0 & t_{43} & t_{44} \end{pmatrix},$$ where $t_{ij} \in \k$ and $t_{12}t_{21}=t_{34}t_{43}-t_{33}t_{44} \neq 0$.

If $\tau \in \Aut(Q)$, then $\tau$ is given by $$\begin{pmatrix} t_{11} & 0 & 0 & 0 \\ 0 & t_{22} & 0 & 0 \\ t_{31} & t_{32} & t_{33} & t_{34} \\ t_{41} & t_{42} & t_{43} & t_{44} \end{pmatrix} \text{ or }\begin{pmatrix} 0 & t_{12} & 0 & 0 \\ t_{21} & 0 & 0 & 0 \\ t_{31} & t_{32} & t_{33} & t_{34} \\ t_{41} & t_{42} & t_{43} & t_{44}\end{pmatrix}.$$  The first matrix maps the span of the defining relations of $R$ back to itself if and only if $t_{31}=t_{32}=t_{41}=t_{42}=0$ and $t_{11}t_{22}+t_{34}t_{43}-t_{33}t_{44}=0$. The second matrix maps the span of the defining relations of $R$ back to itself if and only if $(\alpha^2-1)t_{12}t_{21}=0$, $t_{31}=t_{32}=t_{41}=t_{42}=0$ and $t_{12}t_{21}-t_{34}t_{43}+t_{33}t_{44}=0$.  Since $\alpha \neq 1$ and $t_{12}t_{21} \neq 0$ it follows that $\alpha=-1$.
\end{ex}

\begin{rmk}
The 2-parameter family of algebras obtained in Proposition \ref{prop1} does not explicitly have the algebra $A$ of Example \ref{hung} as a member. However, $A$ can be obtained from the 2-parameter family via the twisting system defined in \cite[Definition 2.1]{HV} given by $t_n(a_1 \dots a_m) = \Pi_{j=1}^m \tau^{1-j-n}t^n\tau^{j-1}(a_j)$ where $m \in \mathbb{N}$ and $a_j \in T(V)_1$. Take 
\[t=\begin{pmatrix} a & 0 & 0 & 0 \\ 0 & 1 & 0 & 0 \\ 0 & 0 & 1 & 0 \\ 0 & 0 & 0 & 1 \end{pmatrix}, \mbox{ and } \hspace{1mm}
\tau^{-1}=\begin{pmatrix} -q^{-1} & 0 & 0 & 0 \\ 0 & -q & 0 & 0 \\ 0 & 0 & -1 & 0 \\ 0 & 0 & 0 & 1 \end{pmatrix},\] to obtain the algebra $A$.
\end{rmk}

\bibliography{BiblioTHCR}
\bibliographystyle{plain}

\end{document}